\newtheorem{theorem}{Theorem}
\newtheorem{lemma}[theorem]{Lemma}
\theoremstyle{remark}
\newtheorem{remark}[theorem]{Remark}
\newtheorem{condition}[theorem]{Condition}
\DeclareMathOperator{\minor}{minor}
\newcommand{\menshe}{\scriptstyle}
\newcommand{\promezhutok}{\;}
\author{I.G. Korepanov}
\thanks{South Ural State University, Chelyabinsk, Russia. E-mail: kig@susu.ac.ru}
\title[Geometric torsions and manifolds with boundary]{Geometric torsions and invariants of manifolds with triangulated boundary}
\date{}
\begin{document}

\begin{abstract}
Geometric torsions are torsions of acyclic complexes of vector spaces which consist of differentials of geometric quantities assigned to the elements of a manifold triangulation. We use geometric torsions to construct invariants for a manifold with a triangulated boundary. These invariants can be naturally united in a vector, and a change of the boundary triangulation corresponds to a linear transformation of this vector. Moreover, when two manifolds are glued by their common boundary, these vectors undergo scalar multiplication, i.e., they work according to M.~Atiyah's axioms for a topological quantum field theory.
\end{abstract}

\maketitle

\section{Introduction}

A topological quantum field theory in dimension~$d$, according to the axioms of M.~Atiyah, should bring in correspondence to a compact oriented $(d+1)$-dimensional manifold with an $m$-component boundary an ``$m$-legged object'', i.e., an element of the tensor product of $m$ finite-dimensional vector spaces over the field~$\mathbb C$, each of these corresponding to a component of the boundary. When we change the orientation of a boundary component, its vector space is replaced with its dual (the space of linear functionals on the initial space). The glueing of manifolds $M_1$ and~$M_2$ by some components of their boundaries is represented by glueing the corresponding ``legs'', and the mentioned objects are muliplied using the natural pairing of vector spaces, since each boundary component participating in the glueing has opposite orientations in $M_1$ and~$M_2$. A single number --- an element of~$\mathbb C$ --- corresponds to a manifold without boundary. For a full presentation of M.~Atiyah axioms, see his paper~\cite{atiyah} or book~\cite{atiyah1}.

In the present paper, we propose a way to construcing such a field theory using ``geometric torsions'', i.e., torsions of acyclic complexes of vector spaces consisting of differentials of geometric quantities assigned to the elements of a manifold triangulation. To be exact, we take a three-dimensional triangulated manifold, with or without boundary, and provide every simplex with a Euclidean geometry (in particular, edges acquire Euclidean lengths). This geometry implies by no means any Euclidean structure for the whole manifold; it provides algebraic quantities enabling us to build acyclic complexes from which we can obtain manifold invariants. Such invariant of geometric origin appeared for the first time in paper~\cite{3dcase}, while in papers~\cite{24,15} it was understood that the algebra behind it is the algebra of acyclic complexes.

Different versions of such invariants have been proposed. In particular, invariants obtained from acyclic complexes, ``twisted'' by a representation of the manifold's fundamental group, are of interest. It became clear during our work on them (although not yet proved in full generality) that they are related to both usual and non-abelian Reidemeister torsion and twisted Alexander polynomial. The first result on this subject is presented in paper~\cite{KM} (without using the language of acyclic complexes); further results are due to E.~Martyushev~\cite{M1,M2} and can be found in the most detailed form in his Ph.D.\ thesis~\cite{M-diss}. 

The feeling was, however, that the potential of geometric torsions was far richer and, besides, a desire remained to construct invariants not using representations of a fundamental group (which are very often hard to describe). This brought us to studying \emph{relative} invariants. An investigation of such invariant for a lens space without a tubular neighborhood of an unknot (i.e., the least knotted loop going along a given element of the fundamental group), undertaken in paper~\cite{dkm}, led us at once to nontrivial results and suggested an idea to construct in such way a topological field theory in the spirit of Atiyah's axioms, or even obeying these axioms literally. The most remarkable thing is that our geometric torsions exist, apparently, for \emph{any-dimen\-sion\-al} manifolds, see, for instance, papers~\cite{33,24,15} for the four-dimen\-sion\-al case.

In the present paper we, nevertheless, restrict ourselves to three-dimen\-sion\-al manifolds and, besides, having only one boundary component. The aim of the paper is to show how to construct a vector of invariants by means of geometric torsions and how a pairing of two such vectors is performed when two manifolds are glued together (the result of which is a manifold without boundary). The contents of the remaining sections of the paper is as follows. In section~\ref{closed}, we explain how to construct an algebraic complex and an invariant for a \emph{closed} manifold. This is exactly the invariant to be obtained as the pairing of invariant vectors for two manifolds with boundary. The construction of these vectors is explained in section~\ref{skraem}. Then, in section~\ref{izmenenie} we show that the invariant vector undergoes a linear transformation under a transition to a new boundary triangulation, and in section~\ref{skleivanie} we define a scalar product of two such vectors (for two manifolds with identically triangulated boundaries) and show that it yields the invariant of the result of the glueing. Some examples confirming the nontriviality of our invariants are presented in section~\ref{primery}. In the concluding section~\ref{discussion}, we discuss the results of the paper and prospects for further research.

\section{The algebraic complex and the simplest invariant for a closed three-dimensional manifold}
\label{closed}

We recall how we construct the simplest version of an algebraic complex for a triangulated closed oriented three-dimensional manifold~$M$ and the corresponding invariant (which is, essentially, the invariant from paper~\cite{3dcase}). The triangulation is not obliged to be strictly combinatorial, that is, we allow both multiple appearance of a simplex in the boundary of a simplex of a greater dimension and presence of several simplices with the same vertices. The following condition must, however, hold: all four vertices of any tetrahedron are different.

\subsection{Construction of the complex}

To each vertex~$A$ of the triangulation, we assign three real numbers $x_A,y_A,z_A$. These are parameters entering in our further constructions; we call them unperturbed Euclidean coordinates of a given vertex. From these coordinates, (unperturbed) Euclidean volumes are determined which we ascribe to simplices of various dimensions, for instance, the length of an edge~$AB$ is
\begin{equation}
l_{AB}=\sqrt{(x_B-x_A)^2+(y_B-y_A)^2+(z_B-z_A)^2}.
\label{dlina}
\end{equation}
The arrangement of vertices in a Euclidean three-dimensional space~$\mathbb R^3$ has no relation to the topology of~$M$, we only require that the coordinates should lie in the general position with respect to all further algebraic constructions. In particular, the oriented volume of any tetrahedron~$ABCD$
\begin{equation}
V_{ABCD}=\frac{1}{6}\left|
\begin{matrix}\overrightarrow{AB}& \overrightarrow{AC}& \overrightarrow{AD}\end{matrix}
\right|,
\label{vabcd}
\end{equation}
where the determinant is composed of three column vectors, does not vanish.

We now fix an orientation of $M$, that is, a consistent orientation of all tetrahedra; the orientation of a tetrahedron is a given order of its vertices to within their even permutations. In the sequel, while saying ``tetrahedron~$ABCD$'', we mean that this exactly ordering of its vertices yields its positive orientation. Nevertheless, the volume~(\ref{vabcd}) of tetrahedron~$ABCD$ can be of any sign, and we will ascribe the same sign to its \emph{inner dihedral angles} which we will need soon.

The algebraic complex consists of vector spaces of column vectors and their linear mappings, that is, matrices:
\begin{equation}
0\longrightarrow \mathfrak e(3) \stackrel{f_1}{\longrightarrow} (dx) \stackrel{f_2}{\longrightarrow} (dl) \stackrel{f_3=f_3^{\rm T}}{\longrightarrow} (d\omega) \stackrel{f_4=-f_2^{\rm T}}{\longrightarrow} (dx^*) \stackrel{f_5=f_1^{\rm T}}{\longrightarrow} \mathfrak e(3)^* \longrightarrow 0.
\label{complex-closed}
\end{equation}
Here $\mathfrak e(3)$ is the Lie algebra of infinitesimal motions of the three-dimensional Euclidean space, whose element we write as a column vector of height~6 in the natural basis of three translations and three rotations; $(dx)$ and $(dx^*)$ are vector spaces of columns of height~$3N_0$, where $N_0$ is the number of vertices in the triangulation; $(dl)$ and $(d\omega)$ are vector spaces of columns of height~$N_1$, where $N_1$ is the number of edges in the triangulation. We write a column vector from space~$(dx)$ as $(dx_A,dy_A,dz_A,\dots,dx_Z,dy_Z,dz_Z)^{\rm T}$, where $A,\dots, Z$ are triangulation vertices; mapping~$f_1$ gives by definition the infinitesimal translations of these vertices from their initial positions $(x_A,y_A,z_A),\dots,(x_Z,y_Z,z_Z)$ under the action of an element of the Lie algebra. We write a column vector from space~$(dl)$ as $(dl_1,\dots,dl_{N_1})^{\rm T}$, where the subscripts number the edges in the triangulation; mapping~$f_2$ gives by definition the infinitesimal changes of edge lengths corresponding to the given changes of vertex coordinates.

To describe mapping~$f_3$, we need the notion of \emph{deficit angle}~$\omega_i$ around a given edge~$i$. If all the edge lengths are obtained from some vertex coordinates according to~(\ref{dlina}), then the algebraic sum of dihedral angles at edge~$i$ (remember the sign with which we agreed to take such an angle!) $\sum_k \varphi_i^{(k)}=0 \bmod 2\pi$, where $k$ numbers the tetrahedra situated around edge~$i$. If, however, we change the edge lengths arbitrarily and independently, the deficit angle appears
\begin{equation}
\omega_i \stackrel{\rm def}{=} - \sum_k \varphi_i^{(k)} \bmod 2\pi.
\label{omegai}
\end{equation}
Mapping~$f_3$ gives the vector of infinitesimal deficit angles $(d\omega_1,\dots,d\omega_{N_1})^{\rm T}$ appearing from given infinitesimal deformations of lengths.

An important property of matrix~$f_3$, which facilitates considerably further construction of complex~(\ref{complex-closed}), is its \emph{symmetry} proved in paper~\cite{3dcase}.\footnote{A more elegant way of proving the symmetry of~$f_3$ can be extracted from section~4 of paper~\cite{33}, where we deal with a similar matrix, but for the case of a \emph{four-dimensional} manifold.} Thus, there is no need to care about the geometric sense of spaces denoted as $(dx^*)$ and~$\mathfrak e(3)^*$; we can instead simply consider them as spaces of column vectors between which matrices $f_4$ and~$f_5$ act, defined by transposing their ``mirror image'' matrices, as indicated above the arrows in~(\ref{complex-closed}).\footnote{The minus sign in the definition of $f_4$ is a correction which I owe to E.V.~Martyushev. It will play its role in the definition of torsions (\ref{tau-closed}) and~(\ref{tau-b}) and invariants obtained from them.}

\begin{theorem}
\label{th-complex-closed}
Sequence (\ref{complex-closed}) is an algebraic complex, i.e., the composition of any two successive arrows is zero.
\end{theorem}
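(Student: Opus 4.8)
The plan is to exploit the transpose relations displayed over the arrows in~(\ref{complex-closed}). Since $f_5=f_1^{\rm T}$, $f_4=-f_2^{\rm T}$ and (by the symmetry result quoted from~\cite{3dcase}) $f_3=f_3^{\rm T}$, one has
\[
f_5 f_4 = -f_1^{\rm T}f_2^{\rm T} = -(f_2 f_1)^{\rm T},\qquad
f_4 f_3 = -f_2^{\rm T}f_3 = -f_2^{\rm T}f_3^{\rm T} = -(f_3 f_2)^{\rm T}.
\]
Hence it suffices to prove the two relations $f_2 f_1 = 0$ and $f_3 f_2 = 0$ in the ``left half'' of the complex; the two relations in the ``right half'' follow by transposition, the symmetry of $f_3$ being exactly what makes the second one work.

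For $f_2 f_1 = 0$: by definition $f_1$ sends $\xi\in\mathfrak e(3)$ to the infinitesimal displacements $\delta\mathbf r_A=\mathbf t+\boldsymbol\omega\times\mathbf r_A$ of the vertices under the one-parameter group of Euclidean motions generated by $\xi$, and $f_2$ records the resulting first-order change of the lengths~(\ref{dlina}). But Euclidean motions preserve all distances exactly, so $l_{AB}$ is constant along the whole group and its derivative vanishes. Concretely, $\delta(\mathbf r_B-\mathbf r_A)=\boldsymbol\omega\times(\mathbf r_B-\mathbf r_A)$, whence $\delta(l_{AB}^2)=2(\mathbf r_B-\mathbf r_A)\cdot\bigl(\boldsymbol\omega\times(\mathbf r_B-\mathbf r_A)\bigr)=0$; this is a routine check done edge by edge.

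For $f_3 f_2 = 0$: here the chain rule is the whole point. An entry of $f_2$ is a derivative $\partial l_j/\partial(\text{vertex coordinate})$ and an entry of $f_3$ is $\partial\omega_i/\partial l_j$, so $f_3 f_2$ is the Jacobian of the composite $\omega_i=\omega_i\bigl(l_1(x),\dots,l_{N_1}(x)\bigr)$ with respect to the vertex coordinates. By the very definition~(\ref{omegai}), when the edge lengths come from an actual placement of the vertices in~$\mathbb R^3$ via~(\ref{dlina}), the dihedral angles around every edge sum to $0\bmod 2\pi$ --- this is nothing but the flatness of Euclidean space around an edge --- so $\omega_i$ vanishes identically as a function of the vertex coordinates, and a function that is identically zero has zero differential. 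Everything is understood as evaluated at the fixed unperturbed coordinates, which are in general position, so the dihedral angles, and hence $f_3$, are smooth there; this is precisely the role of the general-position assumption.

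The only point that needs a little care --- and which I expect to be the main, though modest, obstacle --- is the $\bmod\,2\pi$ ambiguity in~(\ref{omegai}): $\omega_i$ is only defined in $\mathbb R/2\pi\mathbb Z$, so ``$\omega_i\equiv0$'' must be read in that group. This is harmless, because the \emph{differential} $d\omega_i$ is a well-defined real-valued linear form independent of the choice of representative, and the identity $\omega_i\equiv0\pmod{2\pi}$ on the connected configuration space of general-position vertex coordinates forces $d\omega_i=0$ on the image of $f_2$. With this understood, the argument above is complete.
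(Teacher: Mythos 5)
Your proposal is correct and follows essentially the same route as the paper: reduce to $f_2\circ f_1=0$ and $f_3\circ f_2=0$ (lengths are invariant under Euclidean motions; deficit angles vanish identically when lengths come from vertex coordinates) and obtain the other two relations by transposition, using the symmetry of $f_3$. Your explicit cross-product computation and the remark on the $\bmod\,2\pi$ ambiguity merely flesh out details the paper leaves implicit.
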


\begin{proof}
The equalities $f_2\circ f_1=0$ and $f_3\circ f_2=0$ follow from simple geometric reasoning: the former from the fact that motions of a Euclidean space as a whole do not change edge lengths, and the latter from the fact that any changes of edge lengths which are due only to some changes of vertex coordinates leave deficit angles zero. Transposing these equalities, we get $f_5\circ f_4=0$ and $f_4\circ f_3=0$.
\end{proof}

\begin{remark}
At the same time, complex (\ref{complex-closed}) may not be acyclic. According to the results of section~2 of paper~\cite{24}, the ``twisted'' complex is acyclic, one that involves the universal covering of a triangulated manifold~$M$, a representation of the fundamental group of~$M$ into the group of motion of the Euclidean space $\rho\colon\; \pi_1(M)\to \mathrm{E}(3)$, and infinitesimal deformations of representation~$\rho$. Still, there are cases where complex~(\ref{complex-closed}) is acyclic (if the group~$\pi_1(M)$ is finite and, accordingly, $\rho$ cannot be deformed into a non-equivalent one), so, for the time being, we restrict ourselves to it in order not to overload our exposition and because we will use this complex exactly below in theorem~\ref{thglueing}.
\end{remark}

\begin{remark}
\label{remark-acycl-2}
More specifically, the acyclicity can be violated in terms $(dl)$ and~$(d\omega)$. On the other hand, any changes of coordinates preserving all lengths can be obtained from a motion of the Euclidean space, which leads to acyclicity in term~$(dx)$ and, by symmetry, in~$(dx^*)$. As evident is the injectivity of mapping~$f_1$, which leads to acyclicity in term~$\mathfrak e(3)$ and, by symmetry, in~$\mathfrak e(3)^*$.
\end{remark}

\subsection{Torsion and the invariant}

We define the torsion of complex~(\ref{complex-closed}) by formula
\begin{equation}
\tau \stackrel{\rm def}{=} \frac{\minor f_1 \, \minor f_3 \, \minor f_5}{\minor f_2 \, \minor f_4},
\label{tau-closed}
\end{equation}
where the minors of matrices are chosen according to the standard definition of a nondegenerate $\tau$-chain~\cite{T}, if it exists. We are going to give more details on this, as well as specify our choice of minors in the case if there is no nondegenerate $\tau$-chain.

To choose a $\tau$-chain means to choose some basis vectors from the fixed basis of each vector space in an algebraic complex.\footnote{The fact that we regard, from the very beginning, all vector spaces as spaces of column vectors means exactly, of course, that we have chosen fixed bases in them.} These basis vectors correspond to the rows which we choose for a next coming minor, while the rest of basis vectors correspond to the columns of the preceding minor, and so on. A $\tau$-chain is called nondegenerate if all its minors are nonzero; it always exists for an acyclic complex. We would like, however, to use formula~(\ref{tau-closed}) even in the absence of acyclicity. It is not hard to deduce from remark~\ref{remark-acycl-2} that there always exists a $\tau$-chain for which all minors in formula~(\ref{tau-closed}) are nonzero, \emph{except} for, maybe, $\minor f_3$. This is the way how we will always choose our $\tau$-chain.

Torsion~(\ref{tau-closed}) is inverse to the torsion defined according to paper~\cite{24} and the rest of our preceding works; the same applies to invariant~$I$ introduced below. The reason for switching to the new definition is that we will deal below in theorems \ref{thlinear} and~\ref{thglueing} with \emph{linear combinations} of torsions of the kind~(\ref{tau-closed}), to be exact, of its analogues~(\ref{tau-b}) for manifolds with boundary. We note also that for a non-acyclic complex (\ref{complex-closed}) (or complex~(\ref{complex-b}) below), we get $\tau=0$ with our definition~(\ref{tau-closed}), while there would be an infinite torsion with our old definition.

An easy adaptation of theorem~1 from paper~\cite{24} (and a small correction concerning the right placing of minus signs) gives the following theorem.

\begin{theorem}
\label{th-inv-closed}
The value
\begin{equation}
I(M)=\frac{\tau \prod_{\textrm{over all tetrahedra}}(-6V)}{\prod_{\textrm{over all edges}}l^2}
\label{inv-closed}
\end{equation}
is an invariant of manifold~$M$. \qed
\end{theorem}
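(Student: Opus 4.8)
The plan is to reduce the statement to the known invariance result of paper~\cite{24} plus a bookkeeping of how the normalizing factors behave under the Pachner moves. First I would recall that by the cited theorem from~\cite{24} (in its original normalization, with the reciprocal torsion), the quantity built as the product of the torsion with the appropriate powers of tetrahedron volumes and edge lengths is invariant under the elementary rebuildings of the triangulation; passing to the reciprocal convention of~(\ref{tau-closed}) simply inverts this, and the product~(\ref{inv-closed}) is exactly the image of that old invariant under $\tau\mapsto\tau^{-1}$ together with the reciprocal rearrangement of the volume and length factors. So the real content to be checked is that the combination in~(\ref{inv-closed}), with the minus signs now correctly distributed according to the footnote about $f_4=-f_2^{\mathrm T}$, is still unchanged under the two basic local moves: the $2\leftrightarrow3$ Pachner move (replacing two tetrahedra sharing a face by three sharing an edge, and vice versa), and the $1\leftrightarrow4$ move (subdividing a tetrahedron by an interior vertex).

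The key steps, in order, are: (1) fix a nondegenerate $\tau$-chain (or one with only $\minor f_3$ possibly vanishing, as arranged in the text) that is compatible across the move, i.e.\ choosing the selected basis vectors away from the simplices being modified wherever possible; (2) under the $2\to3$ move, one new edge and one new vertex-free cell count appears — actually one new \emph{edge} and a net change of one tetrahedron — so I track how $\minor f_2$, $\minor f_3$, $\minor f_4$ change, expressing the new minors as the old ones times explicit geometric Jacobians (the derivative of the new edge length with respect to the vertex coordinates, and the derivative of the deficit angle around the new edge). These Jacobians are precisely the factors that the extra $(-6V)$ and $l^2$ terms in~(\ref{inv-closed}) are designed to cancel; this is the computation performed in~\cite{24} and I would invoke it, only re-verifying the sign coming from $f_4=-f_2^{\mathrm T}$; (3) under the $1\to4$ move one new vertex (three new coordinates), four new edges, one new tetrahedron replaced by four appear, and again the change in $\tau$ is a ratio of a $3\times3$ coordinate Jacobian (contributing through $\minor f_1$-type blocks or directly through $f_2$) against products of the new geometric quantities, matched term-by-term by the normalizing factors; (4) since every change of triangulation of a closed $3$-manifold is a finite composition of these moves (Pachner's theorem), invariance under each move gives invariance of $I(M)$, and independence of the auxiliary Euclidean coordinates $x_A,y_A,z_A$ follows from the same mechanism (a continuous deformation of the coordinates in general position does not cross a zero of any relevant minor, and the combination is a ratio homogeneous enough to be locally constant, hence globally constant on the connected general-position locus).

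The main obstacle I anticipate is purely the sign accounting: the transposition relations $f_3=f_3^{\mathrm T}$, $f_4=-f_2^{\mathrm T}$, $f_5=f_1^{\mathrm T}$ mean that the minors entering~(\ref{tau-closed}) are not independent, and the minus sign in $f_4$ propagates into the torsion; one must check that across a Pachner move the signs on the two sides of the equality match, not merely the absolute values. A secondary subtlety is the handling of the possibly-zero $\minor f_3$: since the chosen $\tau$-chain allows $\minor f_3$ to vanish (giving $\tau=0$), one must check that the move relates a vanishing situation on one side to a vanishing situation on the other, so that the identity $I(M)=I(M')$ holds trivially as $0=0$ in that degenerate case, and otherwise reduces to the generic computation. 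Apart from these, the argument is the routine local-move bookkeeping already carried out in~\cite{24}, which is why the statement is asserted here with just a $\qed$.
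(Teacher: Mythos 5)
Your proposal follows essentially the same route as the paper, whose entire proof is the remark that ``an easy adaptation of Theorem~1 from~\cite{24} (and a small correction concerning the right placing of minus signs)'' gives the statement; your Pachner-move bookkeeping of the minors and of the normalizing factors $(-6V)$ and $l^2$, together with the sign accounting for $f_4=-f_2^{\rm T}$ and the degenerate case $\minor f_3=0$, is precisely the content of that adaptation. The one weak spot is your justification of independence from the auxiliary coordinates $x_A,y_A,z_A$ (``homogeneous enough to be locally constant'' is not an argument), but this is also subsumed by the same mechanism, e.g.\ by realizing a displacement of a vertex as a $1\to 4$ move followed by a $4\to 1$ move.
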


\begin{remark}
The symmetry of complex~(\ref{complex-closed}) removes the \emph{sign} problem for torsion and, accordingly, for invariant~$I(M)$: the torsion is determined unambiguously if we take a diagonal minor of~$f_3$ (the rows with the same numbers as the columns), and the minors of $f_4$ and~$f_5$ symmetric to minors of $f_2$ and~$f_1$, respectively. We imply in theorem~\ref{th-inv-closed} that we have chosen the minors in this way.
\end{remark}

\section{The algebraic complex and the invariant vector for a manifold with a triangulated boundary}
\label{skraem}

Let now~$M$ be a three-dimensional compact orientable manifold with boundary, and let there be given a fixed triangulation of the boundary, while it is permitted to change the triangulation within the manifold. When constructing an analogue of complex~(\ref{complex-closed}), we must take into account that vertices and edges are divided into inner ones and those lying in the boundary (an edge is considered boundary only it lies entirely within the boundary). No deficit angle~(\ref{omegai}) is defined for boundary edges, so we take instead the most similar quantity
\begin{equation}
\alpha_i \stackrel{\rm def}{=} - \sum_k \varphi_i^{(k)} \bmod 2\pi,
\label{alphai}
\end{equation}
that is, minus dihedral angle at the given edge. Besides, wishing to get as many invariants as possible, we choose and fix for a while two arbitrary subsets $\mathcal C$ and~$\mathcal D$ \emph{of equal cardinality} of the set of boundary edges.

We define the following contracted modification of complex~(\ref{complex-closed}):
\begin{equation}
0\longrightarrow (dx_{\textrm{inner}}) \stackrel{f_2}{\longrightarrow} (dl_{\textrm{inner}},dl_{\textrm{boundary}}) \stackrel{f_3}{\longrightarrow} (d\omega,d\alpha) \stackrel{f_4=-f_2^{\rm T}}{\longrightarrow} (dx^*_{\textrm{inner}}) \longrightarrow 0.
\label{complex-b}
\end{equation}
Thus, only coordinate differentials for \emph{inner} vertices participate now in our complex: a column vector from space~$(dx_{\textrm{inner}})$ consists by definition of these differentials and only of them. Further, by definition, a column vector from space $(dl_{\textrm{inner}},\allowbreak dl_{\textrm{boundary}})$ consists of length differentials for \emph{all inner} edges, as well as those boundary edges entering the set~$\mathcal C$. A column vector from space~$(d\omega,d\alpha)$ consists by definition of deficit angle differentials for all inner edges and $d\alpha_i$ for edges~$i\in \mathcal D$. Finally, in the space~$(dx^*_{\textrm{inner}})$ as well, we include the quantities belonging to all inner vertices and only to them.\footnote{We defined space $(dx^*)$ in section~\ref{closed} as the space of column vectors without going into their geometric sense. Nevertheless, every column element obviously belongs to a certain vertex.}

\begin{theorem}
Sequence (\ref{complex-b}) is an algebraic complex for any choice of sets $\mathcal C$ and~$\mathcal D$.
\end{theorem}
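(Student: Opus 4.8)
The plan is to verify the only two nontrivial compositions in~(\ref{complex-b}), namely $f_3\circ f_2=0$ and $f_4\circ f_3=0$, by writing each one out as a matrix of partial derivatives and reducing it to facts already at hand. As in the closed case, the column of $f_2$ attached to a coordinate of an inner vertex~$A$ lists the derivatives $\partial l_e/\partial(\,\cdot\,)$ for $e$ running over the inner edges and the edges of~$\mathcal C$; a row of $f_3$ lists the derivatives of $\omega_i$ (for an inner edge~$i$) or of $\alpha_i$ (for $i\in\mathcal D$) with respect to the lengths of those same edges; and $f_4=-f_2^{\mathrm T}$ is read off from $f_2$ once the equal-dimensional spaces $(dl_{\textrm{inner}},dl_{\textrm{boundary}})$ and $(d\omega,d\alpha)$ are identified through their fixed bases (legitimate since $|\mathcal C|=|\mathcal D|$).

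Before computing I would record two elementary observations. \emph{(i)}~If $e$ is a boundary edge and $A$ an inner vertex, then $l_e$, by~(\ref{dlina}), depends only on boundary coordinates, so every entry of $f_2$ pairing a boundary edge with an inner vertex vanishes; in particular all $\mathcal C$-rows of $f_2$ are zero, and correspondingly (after the identification used to define $f_4$) the whole $d\alpha$-block of $f_4$ is zero. \emph{(ii)}~By~(\ref{omegai}) and~(\ref{alphai}) the derivative of $\omega_i$, or of $\alpha_i$, with respect to an edge length $l_j$ equals $-\sum_k\partial\varphi_i^{(k)}/\partial l_j$; the symmetry of $f_3$ proved in~\cite{3dcase} is a symmetry of this matrix of derivatives taken over \emph{all} pairs of edges, whether inner or boundary, so it covers the mixed $\omega$--$\alpha$ entries occurring here.

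Then I would simply multiply the matrices. In $(f_3\circ f_2)_{i,(A,\,\cdot\,)}=\sum_e(f_3)_{i,e}(f_2)_{e,(A,\,\cdot\,)}$, adding back the terms of the boundary edges outside $\mathcal C$ changes nothing by~\emph{(i)}, so the chain rule identifies the sum with $\partial\omega_i/\partial(\,\cdot\,)$ when $i$ is inner and with $\partial\alpha_i/\partial(\,\cdot\,)$ when $i\in\mathcal D$. In $(f_4\circ f_3)_{(A,\,\cdot\,),j}$ the $d\alpha$-block of $f_4$ drops out by~\emph{(i)}, the symmetry~\emph{(ii)} lets me interchange the order of differentiation, and again restoring the boundary edges outside $\mathcal C$ costs nothing, so the entry becomes, up to sign, $\partial\omega_j/\partial(\,\cdot\,)$ for $j$ inner and $\partial\alpha_j/\partial(\,\cdot\,)$ for $j\in\mathcal C$. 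Thus both relations reduce to the single claim that, at a general-position Euclidean configuration, $\omega_i$ and $\alpha_i$ have zero derivative with respect to the coordinates of an \emph{inner} vertex. For the deficit angles $\omega_i$ this is exactly the geometric input of Theorem~\ref{th-complex-closed}: in an honest Euclidean configuration all deficit angles are identically zero.

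The only genuinely new ingredient --- and the step I expect to be the real obstacle --- is the corresponding statement for boundary edges: $\alpha_i$ is independent of the coordinates of every inner vertex. I would prove this by the telescoping that already underlies the closed-case identity $\sum_k\varphi_i^{(k)}\equiv 0\pmod{2\pi}$. List the tetrahedra $T_1,\dots,T_m$ of the star of a boundary edge $i=PQ$ so that $T_k$ and $T_{k+1}$ share a triangle through $PQ$; the outer triangle of $T_1$ and that of $T_m$ are boundary triangles, hence $P$, $Q$ and their third vertices lie on the fixed boundary. Parametrising by an angular coordinate the half-planes bounded by the line $PQ$, the signed dihedral angle $\varphi_i^{(k)}$ equals the increment of this coordinate as one crosses $T_k$ from one of its triangles at $i$ to the other, so $-\alpha_i=\sum_k\varphi_i^{(k)}$ is the total increment from the outer triangle of $T_1$ to that of $T_m$. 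Since no tetrahedron of the star degenerates at our configuration, this total increment is a locally constant function of the coordinates of every vertex of the star other than $P$, $Q$ and the two outer vertices; all four of these being on the boundary, moving any inner vertex cannot change $\alpha_i$, i.e.\ $\partial\alpha_i/\partial(\textrm{a coordinate of an inner vertex})=0$. This closes both verifications, and as nothing above singled out particular sets $\mathcal C$ and~$\mathcal D$, the conclusion holds for every choice of them.
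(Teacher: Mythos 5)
Your proof is correct and follows essentially the same route as the paper: the paper likewise reduces $f_3\circ f_2=0$ to the observation that deficit angles stay zero and boundary dihedral angles $\alpha_i$ are unaffected when only inner vertices move, and obtains $f_4\circ f_3=0$ by interchanging $\mathcal C\leftrightarrow\mathcal D$ and transposing, which is exactly the symmetry of $f_3$ you invoke. You merely supply details the paper leaves implicit, notably the telescoping argument showing that $\alpha_i$ depends only on boundary vertices.
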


\begin{proof}
The equality $f_3\circ f_2$ follows from the same geometric considerations as in the proof of theorem~\ref{th-complex-closed}. One should just notice that not only deficit angles remain zero when we move inner vertices but also dihedral angles~$\alpha$ at boundary edges do not change as long as the boundary vertices do not move.

The equality $f_4\circ f_3$ follows from the preceding if we interchange $\mathcal C \leftrightarrow \mathcal D$ and transpose the matrices.
\end{proof}

If complex~(\ref{complex-b}) is acyclic, we define its torsion by the formula
\begin{equation}
\tau \stackrel{\rm def}{=} \frac{\minor f_3}{\minor f_2 \, \minor f_4},
\label{tau-b}
\end{equation}
where the minors are chosen according to the rule for a nondegenerate $\tau$-chain and, besides, minors of $f_2$ and~$f_4$ must be mutually symmetric. Even if complex~(\ref{complex-b}) is not acyclic, the rectangular matrix~$f_2$ always has a nonzero minor of the maximal size (equal to the tripled number of inner vertices): this can be easily understood if one counts the inner vertices in a certain order, every time putting three edges in correspondence to the vertex. Namely, we start from a vertex joined (at least) by three edges to the \emph{boundary}, and include in the minor of~$f_2$ three columns corresponding to this vertex and three rows corresponding to these edges; then we choose at every step a vertex joined to the boundary and/or to already chosen vertices by three edges and proceed in a similar way. The obtained minor has a block triangular form (with $3\times 3$ blocks on the diagonal), and it makes no difficulty to check that it does not equal zero (recall that all vertex coordinates are in the general position!). We will assume thus that such exactly minor of~$f_2$ and its symmetric minor of~$f_4$, \emph{the same for all $\mathcal C$ and~$\mathcal D$}, are chosen in formula~(\ref{tau-b}), while we take for the minor of~$f_3$ the rows and columns corresponding to the edges not involved in the minors of $f_4$ and~$f_2$ respectively. This extendes definition~(\ref{tau-b}) to the non-acyclic case $\minor f_3=0$ as well.

\begin{theorem}
\label{th-b}
The value
\begin{equation}
I_{\mathcal C,\mathcal D}(M)=\frac{\tau \prod_{\textrm{over all tetrahedra}}(-6V)}{\prod_{\textrm{over inner edges}}l^2}
\label{inv-b}
\end{equation}
for given sets of boundary edges $\mathcal C$ and~$\mathcal D$ is an invariant of manifold~$M$ with triangulated boundary; it is a function of boundary vertex coordinates.
\end{theorem}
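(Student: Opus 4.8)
The plan is to reproduce, in the relative setting, the argument by which Theorem~1 of~\cite{24} is proved (with the sign conventions fixed as above); Theorem~\ref{th-inv-closed} for the closed case is the same argument without a boundary. Two assertions have to be checked: \emph{(i)} for a fixed combinatorial triangulation, $I_{\mathcal C,\mathcal D}(M)$ does not depend on the general-position coordinates chosen for the \emph{interior} vertices, and \emph{(ii)} $I_{\mathcal C,\mathcal D}(M)$ does not change under any re-triangulation of the interior of~$M$ that keeps the triangulated boundary, the sets $\mathcal C$, $\mathcal D$, and the boundary vertex coordinates fixed. Together, (i) and (ii) say exactly that $I_{\mathcal C,\mathcal D}(M)$ is a number depending only on $M$ with its fixed triangulated boundary, and a function of the boundary vertex coordinates, as claimed.

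Claim (i) is a purely local matter, insensitive to the boundary: a displacement of a single interior vertex contributes to the length differentials a vector lying in $\operatorname{im} f_2$, and its effect on $\minor f_3$ (computed through the diagonal $\tau$-chain) is cancelled by the simultaneous change of the factors $\prod(-6V)$ and $\prod l^2$ in~(\ref{inv-b}); this is precisely the computation carried out in~\cite{24} for the closed case, and the presence of $\partial M$ changes nothing in it. The same manipulation shows that $I_{\mathcal C,\mathcal D}(M)$ does not depend on which admissible maximal minor of~$f_2$ (together with its symmetric minor of~$f_4$) is used: for an acyclic complex the torsion of a nondegenerate $\tau$-chain is independent of the chain up to sign, the symmetry $f_4=-f_2^{\mathrm T}$ with the diagonal choice for the $f_3$-minor removes the sign, and in the non-acyclic case $\minor f_3=0$ forces $I_{\mathcal C,\mathcal D}(M)=0$ regardless of any choice.

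For claim (ii) I would invoke the relative Pachner theorem: two triangulations of~$M$ inducing the same triangulation on~$\partial M$ are connected by a finite sequence of bistellar moves performed in the interior --- the $2\to3$ move (two tetrahedra $ABCD$, $ABCE$ sharing the face $ABC$ are replaced by three tetrahedra $ABDE$, $ACDE$, $BCDE$ sharing the new edge $DE$), the $1\to4$ move (a tetrahedron $ABCD$ is subdivided by a new interior vertex~$E$ into $ABCE$, $ABDE$, $ACDE$, $BCDE$), and their inverses. Such a move creates only interior simplices --- the new edge $DE$, resp.\ the new vertex~$E$, cannot lie in $\partial M$, whose triangulation is unchanged --- so it suffices to treat $2\to3$ and $1\to4$, their inverses following formally. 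The central algebraic fact, the analogue for~(\ref{complex-b}) of the key lemma of~\cite{3dcase,24}, is that a move replaces~(\ref{complex-b}) by a complex chain homotopy equivalent to it, direct-summed with a short \emph{elementary acyclic} complex supported on the newly introduced length, angle and (for $1\to4$) coordinate differentials. From this, acyclicity is preserved by the move --- so in the non-acyclic case $I_{\mathcal C,\mathcal D}(M)=0$ before and after --- and in the acyclic case the torsion~(\ref{tau-b}) is multiplied by the torsion of the elementary complex together with the innocuous Jacobians relating old and new bases (the chosen minor of $f_2$, and the symmetric minor of $f_4$, can be kept unchanged for $2\to3$, and change only by a $3\times3$ Jacobian in the coordinates of~$E$ for $1\to4$). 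It then remains to identify the resulting local factor: for the $2\to3$ move, with $\minor f_3$ extended by the row and column $DE$, it equals, up to the fixed signs,
\begin{equation*}
\frac{\minor f_3'}{\minor f_3}=l_{DE}^{2}\;\frac{(-6V_{ABCD})(-6V_{ABCE})}{(-6V_{ABDE})(-6V_{ACDE})(-6V_{BCDE})},
\end{equation*}
which is exactly the reciprocal of the change of $\prod_{\textrm{over all tetrahedra}}(-6V)\big/\prod_{\textrm{over inner edges}}l^2$ under the same move, so $I_{\mathcal C,\mathcal D}(M)$ is unchanged; the $1\to4$ move is handled identically, with a $(3,4,4,3)$-dimensional elementary acyclic complex on the coordinates of~$E$ and the four new edges, and with the four new tetrahedra and four new edges entering the bookkeeping.

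I expect the main obstacle to be exactly the explicit local computation just quoted --- proving the displayed expression for $\minor f_3'/\minor f_3$ and its $1\to4$ counterpart. This is where the geometry really enters: one uses the Schläfli-type identity for the dependence of a dihedral angle on the length of the edge at which it sits, together with Gram / Cayley--Menger determinant relations among the $V$'s and $l$'s of the tetrahedra surrounding the move; it is precisely this computation, done in~\cite{3dcase}, that pins down the normalizing factors $-6V$ and $l^2$ in~(\ref{inv-b}). A secondary point needing care is the relative Pachner theorem itself: one must ensure that two triangulations agreeing on $\partial M$ can be joined without ever altering $\partial M$ --- for instance by first performing interior refinements inside a collar of $\partial M$ --- which does not enlarge the list of moves to be analyzed.
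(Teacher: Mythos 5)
Your proposal follows essentially the same route as the paper, which proves the theorem by citing~\cite{dkm} and naming exactly the two steps you carry out: connectedness of interior triangulations by relative Pachner moves $2\leftrightarrow 3$, $1\leftrightarrow 4$ not touching $\partial M$, and a local computation of how the matrices of~(\ref{complex-b}) (hence the torsion and the normalizing factors $-6V$, $l^2$) change under each move. Your displayed local factor for the $2\to 3$ move is consistent with~(\ref{inv-b}), and your added remarks on independence of the choice of the $f_2$-minor and on the non-acyclic case $\minor f_3=0$ fill in details the paper leaves to the cited reference.
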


We call the cardinality of any of the sets $\mathcal C$ and~$\mathcal D$ \emph{level} of the corresponding invariant. For instance, a \emph{zero level invariant} corresponds to empty sets; such were the invariants studied in paper~\cite{dkm}.

\begin{proof}[Proof of theorem \ref{th-b}]
In paper~\cite{dkm}, the proof of a similar statement is presented for the particular case of zero level invariants and a torus  triangulated in a special way as the manifold boundary. Recall that it consists in two main parts: first, we prove that one can pass from a triangulation to another one by ``relative'', that is, not touching the boundary, Pachner moves $2\leftrightarrow 3$ and~$1\leftrightarrow 4$, and second, we investigate what happens to matrices entering the complex under these moves. Regardless of the fact that we are now in a more general situation, the reasoning from~\cite{dkm} is still fully applicable to our case, so we refer the reader to the mentioned work for details.
\end{proof}

\begin{remark} \label{remark-sign-ICD}
The question with the \emph{signs} of invariants $I_{\mathcal C,\mathcal D}(M)$ is a bit more complicated than for closed manifolds: the minors of $f_5$ and~$f_4$ are still symmetric to the minors of $f_1$ and~$f_2$ respectively, but the sign of the minor of~$f_3$, with $\mathcal C \ne \mathcal D$, does depend on the edge ordering. One can see, however, that it is enough to fix an ordering of only the \emph{boundary} edges, so this dependence does not bring about any big difficulties.
\end{remark}

Certainly, all possible invariants for various $\mathcal C, \mathcal D$ are far from being independent; we discuss it in detail in section~\ref{discussion}.

\section{The transformation of invariant vector under a change of boundary triangulation}
\label{izmenenie}

We now consider what happens to the set of invariants (\ref{inv-b}) taken for all possible $\mathcal C, \mathcal D$ under a change of the triangulation of manifold~$M$'s boundary~$\partial M$. To pass from a triangulation of~$\partial M$ to another, one can use a sequence of two-dimensional Pachner moves $2\to 2$ and~$1\leftrightarrow 3$.

Move~$2\to 2$ corresponds to glueing a new tetrahedron to the boundary by its two faces, say, tetrahedron~$ABCD$ by faces $ABC$ and~$BDC$. Edge~$BC$ becomes inner, instead, new edge~$AD$ appears on the boundary. We choose some edge sets $\mathcal C$ and~$\mathcal D$ for the situation \emph{after} the move~$2\to 2$ and try to express the minors entering in formula~(\ref{tau-b}) in terms of those given \emph{before} this move.

Firstly, we retain \emph{the same} minors of matrices $f_2$ and~$f_4$, chosen as described after formula~(\ref{tau-b}), because the rows and columns needed for them have not undergone any changes (recall that, moreover, these minors do not depend on $\mathcal C$ and~$\mathcal D$). Now we study what happens to matrix~$f_3$.

Consider first the ``full'' matrix~$f_3$, corresponding to the case $\mathcal C=\mathcal D=\{$all boundary edges$\}$. The changes in it can be described as follows: expand~$f_3$ first with a zero row and a zero column which will correspond to edge~$AD$. Then add to the obtained matrix the matrix $(\partial \alpha_i / \partial l_j)$ of partial derivatives of minus dihedral angles with respect to edge lengths in the new tetrahedron~$ABCD$, also expanded with necessary zero rows and columns.

As for the matrix of minor~$f_3$ for arbitrary $\mathcal C, \mathcal D$, it, being a submatrix of the ``full'' matrix, is obtained from some submatrix of the ``full'' matrix, taken before the move~$2\to 2$, in a similar way: expanding, if necessary, by a new row and column and adding the relevant submatrix of~$(\partial \alpha_i / \partial l_j)$. The key for calculating the new minor is given by the following lemma.

\begin{lemma}
\label{lemmaA+B}
The determinant of a sum of matrices $A+B$ is
\begin{equation}
\det(A+B)=\sum \epsilon \, \minor A \, \minor B,
\label{A+B}
\end{equation}
where the summation goes over \emph{all} minors of matrix~$A$, starting with zero size and through the full size; for the minor of~$B$, complementary rows and columns to minor~$A$ are taken; the sign $\epsilon=\pm 1$ corresponds to the sign at the cofactor of minor of~$A$.
\end{lemma}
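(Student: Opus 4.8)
The plan is to derive the identity from the multilinearity of the determinant in the columns of the matrix $A+B$. Write $A+B$ as the matrix whose $j$-th column is $a_j+b_j$, where $a_j$ and $b_j$ are the $j$-th columns of $A$ and $B$ respectively. Expanding $\det(A+B)$ by linearity in each column successively, one obtains a sum of $2^n$ determinants, one for each subset $S\subseteq\{1,\dots,n\}$ of column indices: in the determinant indexed by $S$ we take column $a_j$ for $j\notin S$ and column $b_j$ for $j\in S$. Thus
\begin{equation}
\det(A+B)=\sum_{S\subseteq\{1,\dots,n\}} \det\bigl(\text{columns }a_j,\,j\notin S;\ b_j,\,j\in S\bigr).
\label{colsplit}
\end{equation}

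The next step is to evaluate each of these ``mixed-column'' determinants by a Laplace expansion along the set of columns indexed by $S$. The Laplace expansion along the columns in $S$ expresses such a determinant as $\sum_{R}\pm M_R^{(B,S)}\,M_{\bar R}^{(A,\bar S)}$, where $R$ runs over all row-subsets of the same cardinality as $S$, $M_R^{(B,S)}$ is the minor of $B$ sitting in rows $R$ and columns $S$, $M_{\bar R}^{(A,\bar S)}$ is the complementary minor of $A$ in the complementary rows and columns, and the sign is $(-1)^{\sigma(R)+\sigma(S)}$ with $\sigma$ denoting the sum of the indices in a set. Substituting this into \eqref{colsplit} and re-indexing the double sum by the pair (minor of $A$ in rows $\bar R$ and columns $\bar S$, complementary minor of $B$), we recover exactly \eqref{A+B}: the outer sum is over all square submatrices of $A$ of every size from $0$ to $n$, the minor of $B$ is taken in the complementary rows and columns, and the sign $\epsilon$ collecting $(-1)^{\sigma(R)+\sigma(S)}$ is precisely the sign of the cofactor of that minor of $A$.

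The only genuinely delicate point is bookkeeping of the signs: one must check that the sign $(-1)^{\sigma(R)+\sigma(S)}$ produced by the Laplace expansion of the mixed-column determinant coincides with the sign that, by definition, accompanies the minor of $A$ when we view $\minor A$ as entering a cofactor. This is a standard but error-prone computation; I would handle it by fixing the convention that the sign attached to the minor of $A$ in rows $\bar R$, columns $\bar S$ is $(-1)^{\sum_{i\in\bar R}i+\sum_{j\in\bar S}j}$ and noting that $\sum_{i\in\bar R}i+\sum_{j\in\bar S}j \equiv \sum_{i\in R}i+\sum_{j\in S}j \pmod 2$ whenever $|R|=|S|$, since both sides differ from $\tbinom{n+1}{2}$ by the same parity shift. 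The degenerate extremes (the empty minor of $A$, contributing $\det B$, and the full minor of $A$, contributing $\det A$) are consistent with this and serve as sanity checks.
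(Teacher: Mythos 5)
Your proof is correct, but it takes a somewhat different route from the paper's. The paper's argument is a one-step monomial count: write $\det(A+B)$ via the Leibniz formula as $\sum_\pi \operatorname{sgn}(\pi)\prod_i\bigl(a_{i\pi(i)}+b_{i\pi(i)}\bigr)$, expand every product, and observe that grouping the resulting $2^n\,n!$ monomials according to which factors were taken from $B$ reproduces exactly the summands of $\sum\epsilon\,\minor A\,\minor B$. You instead split by columns first (multilinearity gives the $2^n$ mixed determinants) and then invoke the generalized Laplace expansion along the $B$-columns of each one. The two routes enumerate the same monomials; the difference is where the sign bookkeeping lives. The paper's version is self-contained but compresses the sign verification into the word ``clearly'' --- checking that the regrouped signs are precisely the cofactor signs is, in substance, a proof of the generalized Laplace expansion. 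Your version outsources exactly that verification to the Laplace theorem, and the only thing left to check is that its sign $(-1)^{\sigma(R)+\sigma(S)}$ agrees with the cofactor sign attached to the complementary minor of $A$, which you do correctly via the parity identity $\sigma(R)+\sigma(S)\equiv\sigma(\bar R)+\sigma(\bar S)\pmod 2$. Both arguments are complete; yours is arguably the more checkable one, at the cost of citing the Laplace expansion as a known result.
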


\begin{proof}
Write $\det(A+B)$ as an alternated sum of products of matrix~$A+B$ elements and expand the parentheses in every term. Clearly, the obtained alternated sum consists exactly of the same summands as the right-hand side of~(\ref{A+B}).
\end{proof}

Assuming that the matrix of the new minor of~$f_3$ plays the role of~$(A+B)$ in lemma~\ref{lemmaA+B}, while the relevant submatrix of $(\partial \alpha_i / \partial l_j)$ for tetrahedron~$ABCD$ --- the role of~$A$, we get a linear combination of quantities $\minor B$, and it is not hard to see that each of these latter minors, entering in the sum~(\ref{A+B}) with a nonvanishing coefficient, is exactly a minor of~$f_3$ from formula~(\ref{tau-b}) for some $\mathcal C, \mathcal D$ in the situation \emph{before} the move~$2\to 2\,$!

Considering the invariants~(\ref{inv-b}) for various $\mathcal C, \mathcal D$ as components of the \emph{invariant vector}~$\vec I(M)$, we get the following lemma.

\begin{lemma} \label{lemma22}
To a Pachner move $2\to 2$ on the boundary of manifold~$M$ corresponds a linear transformation of the invariant vector~$\vec I(M)$.  
\end{lemma}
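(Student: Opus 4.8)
The plan is to read off the linear transformation directly from the minor-expansion computation that precedes the lemma, and then package it so that it acts on the whole vector $\vec I(M)$ uniformly. First I would fix, once and for all, the minors of $f_2$ and $f_4$ used in formula~(\ref{tau-b}); as already noted after that formula, these are independent of $\mathcal C$ and $\mathcal D$ and, crucially, the rows and columns they involve are untouched by the move~$2\to 2$ (the edge $BC$ that becomes inner and the edge $AD$ that is newly created on the boundary are not among the inner edges used to build the block-triangular minor of $f_2$, provided we add $AD$ and $BC$ at the end of the chain of inner vertices/edges). Hence the denominators $\minor f_2\,\minor f_4$ in~(\ref{tau-b}) are literally the same before and after the move, and also the product $\prod(-6V)$ over tetrahedra changes only by the single new factor $(-6V_{ABCD})$, while $\prod l^2$ over inner edges changes only by the single new factor $l_{BC}^2$; both of these are fixed nonzero scalars depending on the (fixed!) boundary coordinates and on the coordinates of $D$, which we may also regard as fixed. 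So every component $I_{\mathcal C,\mathcal D}(M)$ after the move equals a common nonzero scalar $\lambda=(-6V_{ABCD})/(l_{BC}^2\cdot(\text{old denominator ratio already absorbed}))$ — more precisely, $I_{\mathcal C,\mathcal D}^{\mathrm{new}}=\dfrac{(-6V_{ABCD})}{l_{BC}^2}\cdot\dfrac{\minor f_3^{\mathrm{new}}(\mathcal C,\mathcal D)}{\minor f_2\,\minor f_4}\cdot\prod_{\text{old}}(-6V)\big/\prod_{\text{old inner}}l^2$ — times the new $f_3$-minor.

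Next I would invoke Lemma~\ref{lemmaA+B} exactly as set up in the paragraph before the statement: write the matrix of $\minor f_3^{\mathrm{new}}(\mathcal C,\mathcal D)$ as $A+B$, where $A$ is the (expanded) submatrix of $(\partial\alpha_i/\partial l_j)$ for the new tetrahedron $ABCD$ and $B$ is the (expanded) old submatrix of the ``full'' $f_3$. Expanding by~(\ref{A+B}) gives
\begin{equation}
\minor f_3^{\mathrm{new}}(\mathcal C,\mathcal D)=\sum_{\mathcal C',\mathcal D'}\epsilon(\mathcal C,\mathcal D;\mathcal C',\mathcal D')\,\bigl(\minor A\bigr)(\mathcal C,\mathcal D;\mathcal C',\mathcal D')\,\minor f_3^{\mathrm{old}}(\mathcal C',\mathcal D'),
\label{f3-transform}
\end{equation}
where the sum is over those pairs $(\mathcal C',\mathcal D')$ of old boundary-edge subsets (of equal cardinality) for which the complementary minor of $B$ is a genuine minor of the ``full'' old $f_3$ of the type appearing in~(\ref{tau-b}) — this is precisely the observation already made in the text, that each $\minor B$ occurring with nonzero coefficient is an old $\minor f_3$ for some $\mathcal C',\mathcal D'$. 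Multiplying~(\ref{f3-transform}) through by the fixed scalar factors identified in the first paragraph turns it into an identity $I_{\mathcal C,\mathcal D}^{\mathrm{new}}(M)=\sum_{\mathcal C',\mathcal D'}T_{(\mathcal C,\mathcal D),(\mathcal C',\mathcal D')}\,I_{\mathcal C',\mathcal D'}^{\mathrm{old}}(M)$ with coefficients $T$ built from $\epsilon$, $\minor A$, and the volume/length factors — none of which depend on the manifold, only on the local geometry of the move. Assembling these coefficients into a matrix $T$ gives $\vec I^{\mathrm{new}}(M)=T\,\vec I^{\mathrm{old}}(M)$, which is the asserted linear transformation.

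The step I expect to be the main obstacle is the bookkeeping of \emph{which} old pairs $(\mathcal C',\mathcal D')$ actually occur and with which signs $\epsilon$ — that is, checking that the complementary minors of $B$ are always of the admissible form (inner edges forced in, the chosen $f_2,f_4$ rows/columns excluded, and the boundary part corresponding to a legitimate $\mathcal C',\mathcal D'$ of equal cardinality) and that the sign $\epsilon$ depends only on the fixed ordering of boundary edges, as flagged in Remark~\ref{remark-sign-ICD}. This requires being careful about the role of the newly-created edge $AD$: it may or may not lie in $\mathcal C$ (resp.\ $\mathcal D$), and in~(\ref{f3-transform}) the column (resp.\ row) for $AD$ is either taken into $\minor A$ or into $\minor B$; when it goes into $\minor B$ the old matrix had no such edge, so that term must vanish — which it does, because the corresponding column of $B$ is zero by construction. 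Handling this, together with the parallel remark that moving $BC$ from boundary to inner only enlarges the forced-inner block without disturbing the chosen $f_2$-minor, is routine but is where all the care is needed; everything else is the purely formal manipulation above.
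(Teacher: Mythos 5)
Your proposal is correct and follows essentially the same route as the paper: keep the $f_2$ and $f_4$ minors fixed (they are unaffected by the move), apply Lemma~\ref{lemmaA+B} with $A$ the expanded submatrix of $(\partial\alpha_i/\partial l_j)$ for the new tetrahedron to express each new $\minor f_3$ as a linear combination of old ones, and absorb the single new factor $(-6V_{ABCD})/l_{BC}^2$ as a scalar that does not disturb linearity. The only difference is that you spell out the bookkeeping (which pairs $(\mathcal C',\mathcal D')$ occur, the vanishing of terms where the new edge $AD$ falls into the zero column of $B$) that the paper leaves implicit in the paragraphs preceding the lemma.
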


\begin{proof}
It remains to note that a new multiplier of the form $(-6V)/l^2$ in formula~(\ref{inv-b}), caused by the appearance of a new tetrahedron and a new inner edge, does not violate the linearity of transformation of the set of minors~$f_3$ and, hence, of vector~$\vec I(M)$.
\end{proof}

A similar result holds also for moves~$1\leftrightarrow 3$. Move $1\to 3$ differs from $2\to 2$, from this viewpoint, only in the way of glueing a new tetrahedron: it is glued by \emph{one} face. Consider a bit more complicated case~$3\to 1$. Let there have been a triangle~$ABC$ in the boundary triangulation, divided in three triangles $ABD$, $BCD$ and~$CAD$. To the move $3\to 1$ corresponds the glueing of a tetrahedron~$DABC$ by these three faces. After doing this, vertex~$D$ and edges $AD$, $BD$ and~$CD$ become \emph{inner}. Thus, matrix~$f_2$ acquires three new columns, corresponding to partial derivatives with respect to $x_D$, $y_D$ and~$z_D$. Accordingly, $\minor f_2$ in formula~(\ref{tau-b}) must as well acquire these three columns, and also three rows corresponding to lengths $l_{AD}$, $l_{BD}$ and~$l_{CD}$. Standard reasoning based on block triangularity shows that, as a result, $\minor f_2$ is simply multiplied by the determinant of Jacobian matrix of partial derivatives of the three mentioned lengths with respect to three coordinates of~$D$; the same applies to the minor of~$f_4$. Such multiplication does not violate the linearity, so we get one more lemma.

\begin{lemma} \label{lemma13}
To Pachner moves $1\leftrightarrow 3$ on the boundary of manifold~$M$ also correspond linear transformations of the invariant vector~$\vec I(M)$.
\qed
\end{lemma}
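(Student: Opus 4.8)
The plan is to treat the moves $1\to 3$ and $3\to 1$ in turn, in each case following the argument used for $2\to 2$ in Lemma~\ref{lemma22}: choose the minors of $f_2$ and~$f_4$ by the recipe given after~(\ref{tau-b}) and track how the minor of~$f_3$ is transformed. For the move $1\to 3$ the new tetrahedron $DABC$ is glued to $\partial M$ along the single face $ABC$, so the new vertex~$D$ and the new edges $AD,BD,CD$ all lie on the boundary and no edge becomes inner. Hence the sets of inner vertices and of inner edges are unchanged, and I may keep exactly the minors of $f_2$ and~$f_4$ used before the move. The minor of~$f_3$ is then modified just as in the $2\to 2$ case: first pad it with zero rows, resp.\ columns, for whichever of $AD,BD,CD$ lie in~$\mathcal D$, resp.\ in~$\mathcal C$, then add the relevant submatrix of $(\partial\alpha_i/\partial l_j)$ for $DABC$. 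Applying Lemma~\ref{lemmaA+B} with this submatrix as~$A$ expresses the new minor of~$f_3$ as a linear combination of minors of~$f_3$ for various $\mathcal C,\mathcal D$ before the move; the only other change in~(\ref{inv-b}) is the scalar factor $-6V_{DABC}$ from the new tetrahedron, there being no new inner edge. So $\vec I(M)$ undergoes a linear transformation, exactly as in Lemma~\ref{lemma22}.

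For the move $3\to 1$ I follow the description preceding the statement. Now $D$ and $AD,BD,CD$ pass from the boundary into the interior, so $f_2$ gains three columns (the partial derivatives in $x_D,y_D,z_D$). I take for $f_2$ the minor produced by the procedure after~(\ref{tau-b}), but begun with the vertex~$D$, which is joined by the three edges $AD,BD,CD$ to the now-boundary vertices $A,B,C$; since $l_{AD},l_{BD},l_{CD}$ involve only the coordinates of $A,B,C,D$, the block with rows $AD,BD,CD$ and columns of the remaining inner vertices vanishes, so block triangularity gives $\minor f_2^{\,\mathrm{new}}=\det J\cdot\minor f_2^{\,\mathrm{old}}$, where $J=\bigl(\partial(l_{AD},l_{BD},l_{CD})/\partial(x_D,y_D,z_D)\bigr)$; the symmetric minor of~$f_4$ behaves identically. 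Because $AD,BD,CD$ are now used by the minors of $f_2$ and~$f_4$, they drop out of the minor of~$f_3$, whose remaining rows and columns are precisely those of the minor of~$f_3$ for the same $\mathcal C,\mathcal D$ before the move; on those edges the matrix of~$f_3$ differs from its old value only by the contribution --- of size at most $3\times 3$ --- of the dihedral angles of $DABC$ at $AB,BC,CA$. A second application of Lemma~\ref{lemmaA+B} then writes the minor of~$f_3$ as a linear combination of minors of~$f_3$ before the move, for various $\mathcal C,\mathcal D$. Collecting terms, the two copies of $\det J$ in the denominator, the factor $-6V_{DABC}$ from the new tetrahedron, and the factor $(l_{AD}l_{BD}l_{CD})^{-2}$ from the three new inner edges in~(\ref{inv-b}) are all scalar functions of the boundary-vertex coordinates, so linearity is again preserved.

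The routine but delicate point, common to both moves, is the bookkeeping concealed in the words ``for various $\mathcal C,\mathcal D$'': one must check that every minor of~$f_3$ appearing with a nonzero coefficient in~(\ref{A+B}) really is a component of the invariant vector before the move, i.e.\ is labelled by a pair of boundary-edge subsets of equal cardinality. This is the step I would verify with the most care; it works because the complementary minor taken from~$A$ in~(\ref{A+B}) is square, so that deleting its rows and columns from $\mathcal D$ and~$\mathcal C$ keeps the two cardinalities equal --- the same mechanism that makes the $2\to 2$ argument of Lemma~\ref{lemma22} go through.
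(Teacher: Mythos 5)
Your proof is correct and follows essentially the same route as the paper's (whose proof is the paragraph preceding the lemma): treat $1\to 3$ as a one-face gluing handled exactly like $2\to 2$ via Lemma~\ref{lemmaA+B}, and for $3\to 1$ enlarge the minors of $f_2$ and~$f_4$ by the three new columns and the rows $l_{AD},l_{BD},l_{CD}$, extracting the Jacobian factor by block triangularity. You in fact supply more detail than the paper does --- notably the explicit tracking of $\minor f_3$ under $3\to 1$ and the check that the equal-cardinality condition on $\mathcal C,\mathcal D$ is preserved because the complementary minor of~$A$ in~(\ref{A+B}) is square --- but the underlying argument is the same.
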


The final result of lemmas \ref{lemma22} and~\ref{lemma13} is the following theorem.

\begin{theorem} \label{thlinear}
A linear transformation~$\mathcal A$ can be associated with a transition from one triangulation of the boundary of a compact three-dimensional manifold~$M$ to another its triangulation, in such way that the invariant vector~$\vec I(M)$ goes into $\mathcal A \vec I(M)$, and $\mathcal A$ can be chosen to depend only on the initial and final boundary triangulations, but not on~$M$ itself.
\end{theorem}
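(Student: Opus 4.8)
The plan is to reduce everything to the elementary boundary moves already handled in Lemmas~\ref{lemma22} and~\ref{lemma13}. By the two-dimensional Pachner theorem, any two triangulations of the closed surface~$\partial M$ are joined by a finite chain of moves $2\to 2$ and~$1\leftrightarrow 3$. Each such move of the boundary triangulation I would realize inside~$M$ as a \emph{relative} Pachner move --- the glueing of one extra tetrahedron to~$\partial M$ along one, two or three of its faces --- which leaves~$M$ unchanged up to homeomorphism, since in all three cases the glued tetrahedron collapses onto the part of the old boundary along which it is attached. For each move in the chain, Lemmas~\ref{lemma22} and~\ref{lemma13} supply a linear map carrying the invariant vector before the move to the invariant vector after it, and $\mathcal A$ is defined as the composition of these maps along the chain. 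The source and target of~$\mathcal A$ are the spaces of invariant vectors attached to the initial and final boundary triangulations, of dimensions fixed by the numbers of pairs $\mathcal C,\mathcal D$ of equal-cardinality subsets of the respective sets of boundary edges.

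Next I would check that $\mathcal A$ so defined depends only on the two boundary triangulations and not on~$M$. For this one re-reads the proofs of Lemmas~\ref{lemma22} and~\ref{lemma13}: for a move $2\to 2$ or $1\to 3$, the matrix of the elementary transformation is obtained, via Lemma~\ref{lemmaA+B} with $A$ the pertinent submatrix of~$(\partial\alpha_i/\partial l_j)$ of the newly glued tetrahedron~$ABCD$, as a matrix whose entries are signed minors of that submatrix; these minors are functions of the edge lengths of~$ABCD$ alone, hence of the coordinates of $A,B,C,D$ alone --- and all four of these vertices lie on~$\partial M$ either before or after the move. For a move $3\to 1$ the extra factor entering $\minor f_2$ (and $\minor f_3$) is, as explained there, the Jacobian of the lengths $l_{AD},l_{BD},l_{CD}$ with respect to the coordinates of~$D$, again a function of boundary vertex coordinates only; likewise the new multipliers $(-6V)/l^2$ in~(\ref{inv-b}). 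Finally, the $f_3$-minors labelling the components of the invariant vector are indexed before each move by subsets of the old set of boundary edges and after it by subsets of the new one, so $\mathcal A$ is manufactured entirely out of data carried by the initial and final boundary triangulations.

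The step I expect to be the main obstacle is the consistent bookkeeping, along the whole chain, of the fixed minors of $f_2$ and~$f_4$ used in~(\ref{tau-b}): these are unaltered by moves $2\to 2$ and $1\to 3$, whereas a move $3\to 1$ turns a boundary vertex into an inner one, enlarging them by three rows and three columns and multiplying them by the Jacobian just mentioned, exactly as in the proof of Lemma~\ref{lemma13}; one must verify that after all such enlargements one still has an admissible choice of minors of the block-triangular type described after~(\ref{tau-b}). One must also ensure that a vertex newly created by a $1\to 3$ move receives a fresh label, so that all four vertices of every tetrahedron stay distinct as required in Section~\ref{closed}. I would close by remarking that $\mathcal A$ need not be unique: only its action on the span of the invariant vectors actually occurring is forced by $\vec I(M)\mapsto \mathcal A\,\vec I(M)$, and the statement asserts merely that some such $\mathcal A$, independent of~$M$, exists.
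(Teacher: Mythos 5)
Your proposal is correct and follows essentially the same route as the paper: the paper's proof simply takes a chain of two-dimensional Pachner moves connecting the two boundary triangulations and composes the linear transformations supplied by Lemmas~\ref{lemma22} and~\ref{lemma13}, the independence from~$M$ being implicit in the fact that each elementary transformation is built from data of the glued tetrahedron alone. Your additional bookkeeping (boundary-only dependence of the entries, the fate of the fixed minors of $f_2$ and~$f_4$, and the non-uniqueness of~$\mathcal A$) spells out details the paper leaves tacit or defers to the remark following the theorem.
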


\begin{proof}
Consider a chain of Pachner moves transforming the old boundary triangulation into the new one. According to the preceding reasoning, the glueing of the corresponding tetrahedron sequence determines a chain of linear transforms acting on~$\vec I(M)$.
\end{proof}

\begin{remark}
We have not proved that two different chains of Pachner moves, yielding the same transformation of the boundary triangulation, will give in this way the same~$\mathcal A$. Even if it were not so, new vectors $\mathcal A \vec I(M)$, for any fixed~$M$ and 	hypothetic different~$\mathcal A$ must coincide. Our conjecture is that the algorithm of constructing~$\mathcal A$ presented above leads really to a result not depending on the choice of a specific chain.
\end{remark}

\section{Glueing of manifolds and a composition of invariants}
\label{skleivanie}

Let $M_1$ and~$M_2$ be compact oriented three-dimensional manifolds with identical but oppositely oriented one-component boundaries: $\partial M_1=-\partial M_2$. Let also identical triangulations be given on $\partial M_1$ and~$\partial M_2$, so that $M_1$ and~$M_2$ can be glued together by identifying the corresponding simplices on their boundaries. The result of this glueing will be a closed manifold~$M$.

We choose for $M_1$ and~$M_2$ some triangulations whose restriction onto the boundary coincides with its given triangulation. In this way $M$ as well gets triangulated; we consider the algebraic complex~(\ref{complex-closed}) written for it. We are going to calculate the torsion of this complex, choosing a $\tau$-chain, i.e., minors, in such way that it can be related to the torsions of complexes~(\ref{complex-b}) written for $M_1$ and~$M_2$.

We will need the following technical condition on the triangulation of surface~$\Gamma \subset M$ --- the common boundary of $M_1$ and~$M_2$ considered regardless of its orientation.

\begin{condition} \label{uslovie}
The vertices in surface $\Gamma$ can be written in such order $A,B,\allowbreak C,\allowbreak D,\dots,Z$ that
\begin{itemize}
\item[a)] vertices $A$, $B$ and~$C$ form a triangle, i.e., they are joined to each other by edges,
\item[b)] each vertex, starting from~$D$, is joined by edges to at least three preceding vertices.
\end{itemize}
All edges are supposed to lie in~$\Gamma$.
\end{condition}

A triangulation of~$\Gamma$ satisfying condition~\ref{uslovie} certainly exists, it can be easily constructed for a surface~$\Gamma$ of any genus~$g$. Represent~$\Gamma$ in a standard way as a $4g$-gon with the following sides, listed in the order as we go around it in the positive direction: $a_1,\allowbreak b_1,\allowbreak a_1^{-1},\allowbreak b_1^{-1}\dots,\allowbreak a_g,\allowbreak b_g,\allowbreak a_g^{-1},\allowbreak b_g^{-1}$. All vertices of this polygon, as is known, are identified to just one vertex; denote it~$D$. We place one more vertex, $C$, in the center of the polygon, and $2g$ more vertices will be situated in the middles of the polygon sides in the following order: $A,\allowbreak B,\allowbreak A,\allowbreak B,\allowbreak E,\allowbreak F,\allowbreak E,\allowbreak F,\dots$. Finally, we join the centers of successive sides with one another, as well as with the polygon center, see figure~\ref{risunok1}.
\begin{figure}
\includegraphics[scale=0.24]{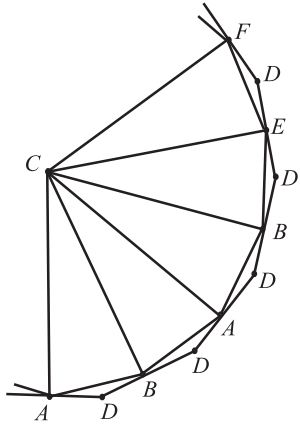}
\caption{Standard triangulation of the common boundary~$\Gamma$ of manifolds $M_1$ and~$M_2$}
\label{risunok1}
\end{figure}

Below we assume that a triangulation has been chosen for~$\Gamma$ for which condition~\ref{uslovie} does hold, for instance, one depicted in figure~\ref{risunok1}.

We order and denote vertices in $\Gamma$ as in condition~\ref{uslovie}. We take for the minor of matrix~$f_1$ its rows corresponding to coordinate differentials $dx_A,\allowbreak dy_A,\allowbreak dz_A,\allowbreak dy_B,\allowbreak dz_B,\allowbreak dz_C$. With~$f_5$, we deal in the symmetrical way, i.e., we take for the minor of this matrix the \emph{columns} corresponding to the same differentials.

Now we turn to the minor of~$f_2$ whose rows correspond to edges in the triangulation or, to be exact, to their length differentials. As for edges in~$\Gamma$, we include in the minor of~$f_2$ the rows corresponding to $dl_{AB},dl_{BC},dl_{CA}$, and also three ones for any of the further vertices $D,\dots,Z$, corresponding to the edges mentioned in condition~\ref{uslovie}. Besides, we include in the minor of~$f_2$ for~$M$ all the rows which were present in the corresponding minors with which we calculated torsions~(\ref{tau-b}) for $M_1$ and~$M_2$. Recall that the way of constructing these minors is regarded as fixed and is explained after formula~(\ref{tau-b}). Then we, of course, choose the minor of~$f_4$ symmetrical to the minor of~$f_2$.

We keep the notation $\minor f_2$ for the minor belonging to the complex~(\ref{complex-closed}) written for~$M$, while we introduce notations $(\minor f_2)^{(1)}$ and~$(\minor f_2)^{(2)}$ for the minors belonging to complexes~(\ref{complex-b}) written for~$M_1$ and~$M_2$ respectively. Then
\begin{equation}
\minor f_2 = (\minor f_2)^{(1)} (\minor f_2)^{(2)} (\minor f_2)^{\Gamma},
\label{f2skleivanie}
\end{equation}
where
\begin{equation}
(\minor f_2)^{\Gamma} = \frac{dl_{AB}\wedge dl_{BC}\wedge dl_{CA}}{dx_B\wedge dx_C\wedge dy_C} \cdot \frac{dl_{AD}\wedge dl_{BD}\wedge dl_{CD}}{dx_D\wedge dy_D\wedge dz_D} \cdot \ldots
\label{f2Gamma}
\end{equation}
is the factor belonging to the boundary in the minor of~$f_2$. We took the liberty to write the determinants of Jacobian matrices (of size $3\times 3$) as ratios of exterior products of differentials. Of course, equalities (\ref{f2skleivanie}) and~(\ref{f2Gamma}) are obtained using the usual block triangularity of matrices, and the dots in the end of~(\ref{f2Gamma}) imply that the next factor has in its denominator the differentials of three coordinates of the next vertex~$E$, and in its numerator --- the length differentials for the three edges which join $E$ to preceding vertices according to condition~\ref{uslovie}, and so on.

Similarly, we keep the notation $\minor f_3$ for the minor belonging to the complex~(\ref{complex-closed}) constructed for~$M$, while we introduce notations $(\minor f_3)_{\mathcal C_1, \mathcal D_1}^{(1)}$ and~$(\minor f_3)_{\mathcal C_2, \mathcal D_2}^{(2)}$ for the minors belonging to the complexes~(\ref{complex-b}) written for~$M_1$ and~$M_2$ respectively, where $\mathcal C_i, \mathcal D_i$ are sets $\mathcal C$ and~$\mathcal D$ of edges (introduced in the beginning of section~\ref{skraem}), its own for each of $M_i,\,i=1,2$. To calculate~$\minor f_3$, we use again lemma~\ref{lemmaA+B}. We get:
\begin{equation}
\minor f_3 = \sum_{\mathcal C_1, \mathcal D_1} \epsilon \, (\minor f_3)_{\mathcal C_1, \mathcal D_1}^{(1)} (\minor f_3)_{\overline{\mathcal C}_1, \overline{\mathcal D}_1}^{(2)},
\label{f3skleivanie}
\end{equation}
where $\overline{\mathcal C}_1$ and~$\overline{\mathcal D}_1$ are complements of sets $\mathcal C_1$ and~$\mathcal D_1$ respectively in the set~$\mathcal E$ of all edges in~$\Gamma$ \emph{minus those edges dealt with in condition~\ref{uslovie}} and which are already involved in the minor of~$f_2$ and its symmetrical minor of~$f_4$. Thus, $\mathcal E$ consists of edges in~$\Gamma$, except $AB,\allowbreak BC,\allowbreak CA,\allowbreak AD,\allowbreak BD,\allowbreak CD$ and then three more edges for every new vertex.

\begin{theorem} \label{thglueing}
The invariant $I(M)$ of the closed manifold~$M$ obtained by glueing two manifolds $M_1$ and~$M_2$ with one-component boundary is equal to the following scalar product of invariant vectors $\vec I(M_1)$ and~$\vec I(M_2)$:
\begin{equation}
I(M) = c \, \sum_{\mathcal C_1, \mathcal D_1} \epsilon \, I_{\mathcal C_1, \mathcal D_1}(M_1) \, I_{\overline{\mathcal C}_1, \overline{\mathcal D}_1}(M_2),
\label{invglueing}
\end{equation}
where the quantity~$c$ belongs only to the boundary~$\Gamma$ by which the glueing goes:
\begin{multline}
c = \frac{(\minor f_1)^2}{ {\displaystyle(-1)^s} \, \bigl((\minor f_2)^{\Gamma}\bigr)^2 \, \prod_{\textrm{over edges in }\Gamma} l^2 } \\
= \frac{(-1)^s}{\prod_{\textrm{over edges in }\Gamma} l^2 } \left( l_{AB}l_{BC}l_{CA} \cdot \frac{l_{AD}l_{BD}l_{CD}}{6V_{ABCD}} \cdot \ldots \right)^2 \\
= \frac{(-1)^s }{\prod_{i\in \mathcal E} l_i^2 \cdot (6V_{ABCD} \cdot \ldots )^2},
\label{velichina_c}
\end{multline}
$s$ is the number of rows in~$(\minor f_2)^{\Gamma}$. Each of the factors denoted by dots in the second line of formula~(\ref{velichina_c}) is analogous to $\frac{l_{AD}l_{BD}l_{CD}}{6V_{ABCD}}$, but instead of~$D$, for a next coming factor we take the next coming vertex according to condition~\ref{uslovie}, while instead of $A$, $B$ and~$C$ --- three vertices with which it is joined according to the same condition.
\end{theorem}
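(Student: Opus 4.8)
The plan is to feed the two factorizations already established before the theorem — formula~(\ref{f2skleivanie}) for $\minor f_2$ and formula~(\ref{f3skleivanie}) for $\minor f_3$ — into the definitions~(\ref{tau-closed}) and~(\ref{inv-closed}), watch everything interior to $M_1$ and $M_2$ cancel, and then evaluate the surviving boundary factor $(\minor f_1)^2/\bigl((\minor f_2)^{\Gamma}\bigr)^2$ by two elementary $3\times3$ Jacobian computations.

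First I would record the bookkeeping for the minors not yet discussed. With the $\tau$-chain for~$M$ chosen as in section~\ref{skleivanie}, the minor of~$f_5$ is symmetric to that of~$f_1$, so $\minor f_5=\minor f_1$ and hence $\minor f_1\,\minor f_5=(\minor f_1)^2$. Since $f_4=-f_2^{\mathrm T}$ and the minor of~$f_4$ is chosen symmetric to that of~$f_2$, one has $\minor f_4=(-1)^{N}\minor f_2$, where $N$ is the size of the minor of~$f_2$ for~$M$; applied to $M_1$ and $M_2$ separately this reads $(\minor f_4)^{(i)}=(-1)^{N_i}(\minor f_2)^{(i)}$ and $(\minor f_4)^{\Gamma}=(-1)^{s}(\minor f_2)^{\Gamma}$, with $N=N_1+N_2+s$ by~(\ref{f2skleivanie}). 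Rewriting~(\ref{tau-b}) for~$M_i$ then gives $(\minor f_3)^{(i)}_{\mathcal C_i,\mathcal D_i}=(-1)^{N_i}\bigl((\minor f_2)^{(i)}\bigr)^2\tau^{(i)}_{\mathcal C_i,\mathcal D_i}$. Substituting this, together with~(\ref{f2skleivanie}) and~(\ref{f3skleivanie}), into~(\ref{tau-closed}), all factors belonging to the interiors of $M_1$ and $M_2$ cancel and one is left with
\[
\tau=\frac{(\minor f_1)^2}{(-1)^{s}\bigl((\minor f_2)^{\Gamma}\bigr)^2}\sum_{\mathcal C_1,\mathcal D_1}\epsilon\,\tau^{(1)}_{\mathcal C_1,\mathcal D_1}\,\tau^{(2)}_{\overline{\mathcal C}_1,\overline{\mathcal D}_1}.
\]

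Next I would feed this into~(\ref{inv-closed}). The tetrahedra of~$M$ are exactly those of~$M_1$ together with those of~$M_2$, and the edges of~$M$ are the inner edges of~$M_1$, the inner edges of~$M_2$, and the edges of~$\Gamma$; so both $\prod_{M}(-6V)$ and $\prod_{M}l^2$ split multiplicatively, and after matching each $\tau^{(i)}$-factor with its tetrahedron/edge products one obtains precisely~(\ref{invglueing}) with $c=(\minor f_1)^2\big/\bigl((-1)^{s}\bigl((\minor f_2)^{\Gamma}\bigr)^2\prod_{\Gamma}l^2\bigr)$, i.e.\ the first line of~(\ref{velichina_c}). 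To reach the second line I would compute the two types of Jacobian occurring in~(\ref{f2Gamma}): for a cascade vertex $D$ joined to preceding vertices $X,Y,Z$, the rows of $\partial(l_{XD},l_{YD},l_{ZD})/\partial(x_D,y_D,z_D)$ are $\overrightarrow{XD}/l_{XD}$, $\overrightarrow{YD}/l_{YD}$, $\overrightarrow{ZD}/l_{ZD}$, whence (after row reduction, using~(\ref{vabcd})) this determinant equals $6V_{XYZD}/(l_{XD}l_{YD}l_{CD})$; for the leading factor a short direct calculation gives $\minor f_1=-\,l_{AB}l_{BC}l_{CA}\cdot\partial(l_{AB},l_{BC},l_{CA})/\partial(x_B,x_C,y_C)$. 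Combining these with~(\ref{f2Gamma}) yields $\minor f_1/(\minor f_2)^{\Gamma}=-\,l_{AB}l_{BC}l_{CA}\cdot\frac{l_{AD}l_{BD}l_{CD}}{6V_{ABCD}}\cdot\ldots$; squaring (which kills the sign) and using $(-1)^{-s}=(-1)^{s}$ gives the second line of~(\ref{velichina_c}), and factoring $\prod_{\Gamma}l^2=\prod_{i\in\mathcal E}l_i^2\cdot(l_{AB}l_{BC}l_{CA}l_{AD}l_{BD}l_{CD}\cdots)^2$ gives the third.

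Since the factorizations~(\ref{f2skleivanie}) and~(\ref{f3skleivanie}) are already in hand, the remaining work is bookkeeping plus the two Jacobian evaluations above. The delicate points are: (a) checking that the minors picked for~$M$ in section~\ref{skleivanie} genuinely form an admissible $\tau$-chain and that~(\ref{f3skleivanie}) captures \emph{all} the nonvanishing terms produced by lemma~\ref{lemmaA+B} — this is where condition~\ref{uslovie} and the precise definition of~$\mathcal E$ enter, and in particular one must use that no cross term survives between an inner edge of~$M_1$ and an inner edge of~$M_2$ in $f_3$; and (b) the signs $\epsilon$ and the global $(-1)^{s}$, which, by remark~\ref{remark-sign-ICD}, depend only on a fixed ordering of the boundary edges and so are consistent on the two sides of~(\ref{invglueing}). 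The main genuine obstacle I anticipate is precisely this verification of the clean block structure of $f_2,f_3,f_4$ across the interface~$\Gamma$; granting it, the cancellation of interior data and the geometric evaluation of $c$ are routine.
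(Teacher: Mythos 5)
Your proposal is correct and follows essentially the same route as the paper's own proof: it substitutes the factorizations (\ref{f2skleivanie}) and (\ref{f3skleivanie}) into (\ref{tau-closed}) and (\ref{inv-closed}), uses $\minor f_5=\minor f_1$ and the $(-1)^s$ relation between the symmetric minors of $f_2$ and $f_4$, and then evaluates the boundary factor by direct computation of the $3\times3$ Jacobians and of $\minor f_1$. Your sign bookkeeping via $N=N_1+N_2+s$ and the identity $\minor f_1=-\,l_{AB}l_{BC}l_{CA}\cdot\partial(l_{AB},l_{BC},l_{CA})/\partial(x_B,x_C,y_C)$ both check out (the latter is just a repackaging of the paper's direct $\mathfrak e(3)$ calculation, and the stray $l_{CD}$ in your cascade Jacobian is an obvious typo for $l_{ZD}$).
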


\begin{proof}
Formula (\ref{invglueing}) together with the first equality in formula~(\ref{velichina_c}) is obtained by juxtaposing formulas (\ref{tau-closed}), (\ref{inv-closed}), (\ref{tau-b}), (\ref{inv-b}), (\ref{f2skleivanie}) and~(\ref{f3skleivanie}) and taking into account the equalness of the minors of $f_1$ and~$f_5$ and the equalness up to a sign~$(-1)^s$ of the minors of $f_2$ and~$f_4$. By the way, it is not hard to deduce from the fact that $s$ is the number of edges mentioned in condition~\ref{uslovie} that
$$
s = 3 \cdot (\hbox{number of vertices in }\Gamma) - 6.
$$
The second equality in~(\ref{velichina_c}) is obtained by a direct calculation of minors: for $\minor f_1$ --- using formulas showing how elements of~$\mathfrak e(3)$ --- three infinitesimal translations and three rotaions --- act on coordinates~$x_A,y_A,\allowbreak z_A,\allowbreak y_B,\allowbreak z_B,z_C$, and for $(\minor f_2)^{\Gamma}$ --- using formula~(\ref{f2Gamma}). The third equality in~(\ref{velichina_c}) follows from the definition of set~$\mathcal E$ after canceling the squared lengths. Certainly, volume $V_{ABCD}$ and the next ones, concealed behind the dots, have purely formal sense and may not correspond to any tetrahedra in the triangulation, since points $A,\allowbreak B,\allowbreak C,\allowbreak D,\dots$ lie in the \emph{surface}~$\Gamma$.
\end{proof}

\begin{remark}
The sign $\epsilon$ in formulas (\ref{f3skleivanie}) and~(\ref{invglueing}) is, certainly, also determined from information belonging only to~$\Gamma$. Namely, $\epsilon = (-1)^{\sigma}$, where $\sigma$ is the sum of numbers of all elements in $\mathcal C_1$ and~$\mathcal D_1$ for the given ordering of vertices in~$\Gamma$, cf.~with remark~\ref{remark-sign-ICD}.
\end{remark}

\section{Examples}
\label{primery}

\subsection{Filled torus}
\label{polnotorie}

Let $M$ be a filled torus and, accordingly, its boundary~$\partial M$ --- two-dimensional torus. We glue the filled torus out of six tetrahedra in the following way. First, we take two identical tetrahedra $ABCD$ and glue them together by their \emph{edges}: glue edge~$AD$ of one tetrahedron to edge~$AD$ of the other one, and do the same with edges~$BC$. A chain of two tetrahedra appears; what prevents it from being a filled torus is its ``zero thickness'' at the places of glueing. Imagine that this chain is arranged as in figure~\ref{2tetraedra}.
\begin{figure}
\includegraphics[scale=0.32]{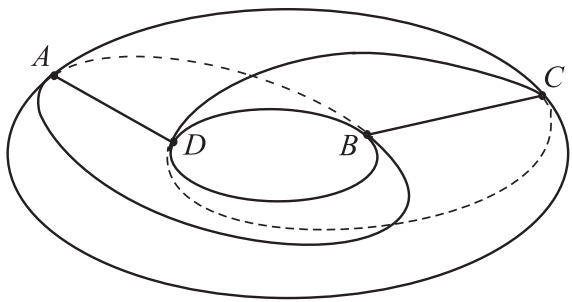}
\caption{The beginning of construction of a triangulated filled torus --- a chain of two tetrahedra~$ABCD$}
\label{2tetraedra}
\end{figure}
Now we glue at ``our'' side one more tetrahedron~$DABC$ (of the opposite orientation!) to faces $ADC$ and~$ABD$. This already creates a nonzero thickness at the edge~$AD$, yet we glue still one more tetrahedron~$ABCD$ to the two free faces of the new tetrahedron~$DABC$, that is, $ABC$ and~$BDC$. We do this having in mind to obtain the triangulation of~$\partial M$ depicted in figure~\ref{razvertka}. In the very same way, in order to remove the zero thickness along edge~$BC$, we glue one more tetrahedron of the opposite orientation at ``our'' side of the figure to faces $BAC$ and~$CDB$, and then glue a tetrahedron of the ``usual'' orientation to the two free faces of the new tetrahedron.

To distinguish edges of the same name, we introduce the following notations. Edges $AD$ and~$BC$ present in figure~\ref{2tetraedra} will be denoted $AD_0$ and~$BC_0$. Then, we think of one of the tetrahedra in figure~\ref{2tetraedra} as first, and the other as second, and accordingly assign to the rest of their edges indices $1$ or~$2$. It remains to denote four edges, of which two lie inside the filled torus (except their ends, of course), and two --- in the boundary. We denote the inner edges as $AD_3$ and~$BC_3$, and the boundary ones --- as $AD_4$ and~$BC_4$.

The obtained triangulated filled torus is depicted in figure~\ref{ris-polnotorie},
\begin{figure}
\includegraphics[scale=0.32]{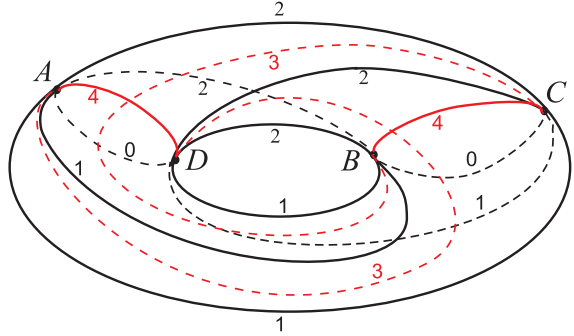}
\caption{Triangulated filled torus
(the numbers correspond to the subscripts at edges)%
}
\label{ris-polnotorie}
\end{figure}
and the development of the triangulation of torus --- its boundary --- in figure~\ref{razvertka}.
\begin{figure}
\includegraphics[scale=0.24]{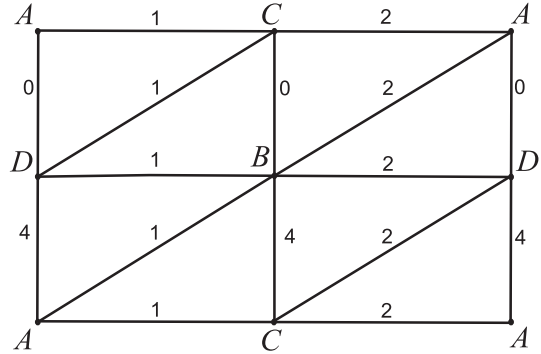}
\caption{Development of the triangulation of torus
(the numbers correspond to the subscripts at edges)%
}
\label{razvertka}
\end{figure}

The ``full'' matrix $f_3$ is of sizes $14\times 14$. Its matrix elements can be, nevertheless, represented in a transparent form, if we write out not themselves but their ratios to some standard expressions, namely, partial derivatives of minus dihedral angles~$\alpha_i$ with respect to lengths~$l_j$ in a tetrahedron~$ABCD$, where $i$ and~$j$ are edges with the same names as in matrix~$f_3$, but without indices. These ratios are shown in table~\ref{f3polnotorie}.\footnote{The method of calculating the entries in table~\ref{f3polnotorie} is easy: look which of the six tetrahedra in the triangulation are common for two given edges. Assign a number~$\pm 1$ to each tetrahedron according to its orientation, and sum up these numbers.}
\begin{table}
$$
\begin{array}{r|c@{\promezhutok} c@{\promezhutok} c@{\promezhutok} c@{\promezhutok} c@{\promezhutok} c@{\promezhutok} c@{\promezhutok} c@{\promezhutok} c@{\promezhutok} c@{\promezhutok} c@{\promezhutok} c@{\promezhutok} c@{\promezhutok} c}
& \menshe AC_1 &\menshe AB_1 &\menshe AD_4 &\menshe BD_1 &\menshe CD_1 &\menshe AD_0 &\menshe AC_2 &\menshe CD_2 &\menshe BC_4 &\menshe BD_2 &\menshe AB_2 &\menshe BC_0 &\menshe BC_3 &\menshe AD_3 \\ \hline
AC_1 & 1 & 1 & 0 & 1 & 1 & 1 & 0 & 0 & 1 & 0 & 0 & 0 & 0 & 0 \\
AB_1 & 1 & 1 & 1 & 1 & 1 & 0 & 0 & 0 & 1 & 0 & 0 & 0 & 0 & 0 \\
AD_4 & 0 & 1 & 1 & 1 & 0 & 0 & 1 & 1 & 0 & 0 & 0 & 0 & 1 & 0 \\
BD_1 & 1 & 1 & 1 & 1 & 1 & 0 & 0 & 0 & 0 & 0 & 0 & 1 & 0 & 0 \\
CD_1 & 1 & 1 & 0 & 1 & 1 & 1 & 0 & 0 & 0 & 0 & 0 & 1 & 0 & 0 \\
AD_0 & 1 & 0 & 0 & 0 & 1 & 1 & 0 & 0 & 0 & 1 & 1 & 2 &-1 & 0 \\
AC_2 & 0 & 0 & 1 & 0 & 0 & 0 & 1 & 1 & 0 & 1 & 1 & 1 & 0 & 0 \\
CD_2 & 0 & 0 & 1 & 0 & 0 & 0 & 1 & 1 & 1 & 1 & 1 & 0 & 0 & 0 \\
BC_4 & 1 & 1 & 0 & 0 & 0 & 0 & 0 & 1 & 1 & 1 & 0 & 0 & 0 & 1 \\
BD_2 & 0 & 0 & 0 & 0 & 0 & 1 & 1 & 1 & 1 & 1 & 1 & 0 & 0 & 0 \\
AB_2 & 0 & 0 & 0 & 0 & 0 & 1 & 1 & 1 & 0 & 1 & 1 & 1 & 0 & 0 \\
BC_0 & 0 & 0 & 0 & 1 & 1 & 2 & 1 & 0 & 0 & 0 & 1 & 1 & 0 &-1 \\
BC_3 & 0 & 0 & 1 & 0 & 0 &-1 & 0 & 0 & 0 & 0 & 0 & 0 & 0 & 0 \\
AD_3 & 0 & 0 & 0 & 0 & 0 & 0 & 0 & 0 & 1 & 0 & 0 &-1 & 0 & 0 \\
\end{array}
$$ 
\caption{Factors by which partial derivatives $\partial \alpha_i / \partial l_j$, taken in a single tetrahedron~$ABCD$, are multiplied to obtain matrix~$f_3$ elements for the filled torus}
\label{f3polnotorie}
\end{table}

The zero level invariant corresponds to the $2\times 2$ minor of the full matrix~$f_3$, obtained from the two last rows and two last columns in table~\ref{f3polnotorie}. As this minor consists entirely of zeros, not only this invariant, but also all level-one invariants vanish. Most level-two invariants vanish as well, and those non-vanishing are characterized by the fact that both sets $\mathcal C$ and~$\mathcal D$ consist of halves of the filled torus \emph{meridians}. Thus, our invariants make it possible to identify the meridians of a filled torus, although they are present in figure~\ref{razvertka} on equal grounds with the other lines.

\begin{remark}
Of course, by glueing two such triangulated filled tori in various ways, one can obtain manifolds $S^2\times S^1$ or~$S^3$, and if it is also permitted to pass to other triangulations, then all lens spaces as well. We leave the investigation of algebraic structures appearing this way for futher works.
\end{remark}

\subsection{Lens spaces without a tubular neighborhood of an unknot}

Recall that ``unknots'' in lens spaces~$L(p,q)$, studied in paper~\cite{dkm}, are the least knotted loops going along nontrivial elements of group~$\pi_1\bigl(L(p,q)\bigr)$. In the mentioned work, invariants were calculated for manifolds with boundary obtained by removing a tubular neiborhood of such unknot from~$L(p,q)$. In terms of the present paper, these are nothing but zero level invariants corresponding to various boundary triangulations, which were related in~\cite{dkm} to different \emph{framings} of the unknot.

We present here the results obtained using formulas of paper~\cite{dkm} for integer framings of unknots in~$L(7,1)$. The integrality of a framing means, in our terms, that the manifold boundary is triangulated as in figure~\ref{razvertka}, the meridian~$ADA$ (or~$CBC$) being fixed (in~\cite{dkm}, we also considered framings differing from these in half-revolution). Framings are enumerated by a number~$m\in \mathbb Z$; number~$n=1,2,3$ indicates how many times the unknot goes around the generator of the fundamental group.\footnote{For the remaining details --- to which framing corresponds $m=0$, in which direction $m$ grows and decreases, what specifically is chosen for the generator of the fundamental group --- the reader is referred to paper~\cite{dkm}.} Below in formulas (\ref{ktilde})--(\ref{n=3}) we present invariants obtained according to formula~(\ref{inv-b}), with slightly changed notations: the implied subscripts $(\mathcal C, \mathcal D)=(\emptyset,\emptyset)$ are omitted, in their place we put the number~$n$; number~$m$ stands for a superscript; these numbers, together with the name of a lens space, determine a manifold with boundary and the boundary triangulation.

We remark first that the dependence of the invariant on coordinates assigned to boundary vertices reduces to a simple multiplier:
\begin{equation}
I_n^{(m)}\bigl(L(p,q)\bigr) = (6V_{ABCD})^4 \tilde I_n^{(m)}\bigl(L(p,q)\bigr),
\label{ktilde}
\end{equation}
where the value with tilde no longer depends on these coordinates (this applies to all $p$ and~$q$). Then, we get from formulas of paper~\cite{dkm}:\footnote{Our $\tilde I_n^{(m)}\bigl(L(p,q)\bigr)$ corresponds to the value $1/I_n^{(2m)}\bigl(L(p,q)\bigr)$ in the notations of~\cite{dkm}.}
\begin{align}
\tilde I_1^{(m)}\bigl(L(7,1)\bigr) &= -7^2 (1-7m)^2 (6+7m)^2   ,  \label{n=1} \\
\tilde I_2^{(m)}\bigl(L(7,1)\bigr) &= -7^2 (3-7m)^2 (10+7m)^2  ,  \\
\tilde I_3^{(m)}\bigl(L(7,1)\bigr) &= -7^2 (-5-7m)^2 (12+7m)^2 .  \label{n=3}
\end{align}

The numbers obtained according to formulas (\ref{n=1})--(\ref{n=3}) for all possible~$m$, are \emph{all different}. Thus, for~$L(7,1)$ our zero level invariant can distinguish all unknots with all framings. Moreover, it never vanishes, that is, it behaves in a completely different way compared to the filled torus.

\section{Discussion}
\label{discussion}

In this paper, the first step has been done to constructing a topological field theory in the spirit of M.~Atiyah's axioms, based on torsions of acyclic complexes of geometric nature. The desire to build such a theory is motivated by the following reasons.

\emph{Finite dimensionality}: the theory is based on a finite manifold triangulation, it involves no functional integrals.

\emph{Any dimension of the manifold}: recall once more the works~\cite{33,24,15} where similar geometric torsions have been constructed for four-dimensional manifolds.

\emph{Richness and diversity of appearing theories}: in the present paper, we used algebraic complexes based on the Euclidean three-dimensional geometry. Meanwhile, our already made calculations~\cite{kkm} for a geometry related to the group $\mathrm{SL}(2,\mathbb C)$ give essentially new results.

\emph{Possibility to introduce $q$-commutation relations}: one of ``non-abelian'' modifications of our theory, proposed by Rinat Kashaev~\cite{kashaev-private}, admits a reduction which imposes Weyl relations $ba=qab$ on noncommuting variables.

As our nearest aim we see, however, the generalization of constructions presented in this paper for manifolds with any number of boundary components.

We conclude this discussion with the following remarks. As was mentioned in the end of section~\ref{skraem}, the components of invariant vector are not independent. Relations between them follow from Sylvester relations~\cite{sylvester,prasolov} among the minors of a matrix.

The simplest Sylvester relation has a three-term bilinear form and deals with the determinant of a matrix, a minor obtained by removing two rows and two columns, and four intermediate minors. It reminds, so, of the known ``free-fermion condition'' $a_+ a_- + b_+ b_- = c_+ c_-$ for the six-vertex model in statistical physics. There are also other indications at the presence of ``free fermions'' in our theory. To explain how it can happen that some of geometric complexes admit an introduction of $q$-commutation relations among some values, we recall that such situation has already been encountered in mathematical physics. Namely, some models in discrete quantum field theory are considered in paper~\cite{KKS} which, on the one hand, are free-fermion, and on the other hand, admit $q$-commutation relations. The matter is that when we have ``many enough'' free fermions, we can impose additional structures on them.

\subsection*{Acknowledgements}

The author thanks R.M.~Kashaev and E.V.~Martyushev for fruitful discussions. This work was performed with support from Russian Foundation for Basic Research, Grant 07-01-96005-r\_ural\_a.

\begin {thebibliography}{99}

\bibitem{atiyah} M.F. Atiyah, Topological quantum field theory, Publications Math\'ematiques de l'IH\'ES 68 (1988) 175--186.

\bibitem{atiyah1} M.~Atiyah, The geometry and physics of knots, Cambridge University Press, 1990.

\bibitem{dkm} J.~Dubois, I.G.~Korepanov, E.V.~Martyushev. Euclidean geometric invariant of framed knots in manifolds. arXiv:math/0605164.

\bibitem{kashaev-private} R.M.~Kashaev, private communication (2007).

\bibitem{kkm} R.M.~Kashaev, I.G.~Korepanov and E.V.~Martyushev. An acyclic complex for three-manifolds based on group~$\mathrm{PSL}(2,\mathbb C)$ and cross-ratios. In preparation.

\bibitem{KKS} R.M.~Kashaev, I.G.~Korepanov and S.M.~Sergeev, Functional Tetrahedron Equation, Theor. Math. Phys., 117 (1998) 1402--1413.

\bibitem{3dcase} I.G. Korepanov, Invariants of PL-manifolds from metrized simplicial complexes. Three-dimensional case, J. Nonlin. Math. Phys. 8 (2001) 196--210.

\bibitem{33} I.G. Korepanov, Euclidean 4-simplices and invariants of four-dimensional manifolds: I.~Moves $3\to 3$, Theor. Math. Phys. 131 (2002) 765--774.

\bibitem{24} I.G. Korepanov, Euclidean 4-simplices and invariants of four-dimensional manifolds: II.~An algebraic complex and moves $2 \leftrightarrow 4$, Theor. Math. Phys. 133 (2002) 1338--1347.

\bibitem{15} I.G. Korepanov, Euclidean 4-simplices and invariants of four-dimensional manifolds: III.~Moves $1 \leftrightarrow 5$ and related structures, Theor. Math. Phys. 135 (2003) 601--613.

\bibitem{KM} I.G.~Korepanov, E.V.~Martyushev, Distinguishing three-dimensional lens spaces $L(7,1)$ and $L(7,2)$ by means of classical pentagon equation, J. Nonlinear Math. Phys. 9 (2002) 86--98.

\bibitem{M1} E.V.~Martyushev, Euclidean simplices and invariants of three-manifolds: a modification of the invariant for lens spaces, Proceedings of the Chelyabinsk Scientific Center 19 (2003), No. 2, 1--5.

\bibitem{M2} E.V.~Martyushev, Euclidean geometric invariants of links in 3-sphere, Proceedings of the Chelyabinsk Scientific Center 26 (2004), No.~4, 1--5.

\bibitem{M-diss} E.V.~Martyushev. Geometric invariants of three-dimensional manifolds, knots and links. Ph.D. Thesis (Physics \& Mathematics). Chelyabinsk, South Ural State University, 2007 (in Russian).
http://www.susu.ac.ru/file/thesis.pdf

\bibitem{prasolov} V.V.~Prasolov, Problems and theorems in linear algebra. American Mathematical Society, 1994.

\bibitem{sylvester} J.J.~Sylvester, On the relation between the minor determinants of linearly equivalent quadratic
functions. Philosophical Magazine 1 (1851, Fourth Series) 295--305.

\bibitem{T} V. Turaev, Introduction to combinatorial torsions, Birkh\"auser (2001).

\end{thebibliography}

\end{document}